%
\documentclass [a4paper, 12pt] {article}
\usepackage{amsfonts}
\usepackage{bm}
\usepackage{amsmath}
\usepackage{enumerate}
\usepackage{enumitem}
\usepackage{graphicx,color}
\usepackage{epstopdf}
\usepackage{float}
\usepackage{amsopn}

\setlength{\oddsidemargin}{-0.25in}
\setlength{\textwidth}{7in}
\setlength{\topmargin}{-.75in}
\setlength{\textheight}{10in}

\newtheorem{theorem}{Theorem}[section]

\newtheorem{remark}[theorem]{Remark}
\newtheorem{lemma}[theorem]{Lemma}

\newtheorem{example}[theorem]{Example}
\newenvironment{proof}[1][Proof]{\noindent\textbf{#1.} }{\ \rule{0.5em}{0.5em}}

\begin{document}
\begin{center}
    {\bf \Large A semi-discrete numerical method for convolution-type unidirectional wave equations}
    \\ ~ \\ \vspace*{20pt}
    H. A. Erbay\textsuperscript{1}, S. Erbay\textsuperscript{1},  A. Erkip\textsuperscript{2}
    \vspace*{20pt}

    {$^{1}$Department of Natural and Mathematical Sciences, Faculty of Engineering, Ozyegin University,  Cekmekoy 34794, Istanbul, Turkey}

    \vspace*{20pt}

    {$^{2}$Faculty of Engineering and Natural Sciences, Sabanci University, Tuzla 34956, Istanbul,  Turkey}
\end{center}

{\let\thefootnote\relax\footnote{{E-mail:   husnuata.erbay@ozyegin.edu.tr, saadet.erbay@ozyegin.edu.tr,
                    albert@sabanciuniv.edu}}}

\begin{abstract}
Numerical approximation of a general class of nonlinear unidirectional wave equations with a convolution-type nonlocality in space is considered. A  semi-discrete numerical method  based on both  a uniform space discretization and the discrete convolution operator is introduced to solve the Cauchy problem. The method is proved to be uniformly convergent as the mesh size goes to zero. The order of convergence for the discretization error  is linear or quadratic depending on the smoothness of the convolution kernel. The discrete problem defined on the whole spatial domain is then truncated  to a finite  domain. Restricting the problem to a finite domain introduces a localization error and  it is proved that this localization error  stays below a given threshold if the finite domain is large enough. For two particular kernel functions, the numerical examples concerning solitary wave solutions illustrate the expected accuracy of the method. Our class of nonlocal  wave equations includes the Benjamin-Bona-Mahony equation as a special case and the present work is inspired by the previous work of Bona, Pritchard and Scott on numerical solution of the Benjamin-Bona-Mahony equation.
\end{abstract}

2010 AMS Subject Classification: 35Q53,   65M12, 65M20, 65Z05
\vspace*{20pt}

Keywords:  Nonlocal nonlinear wave equation, Discretization,  Semi-discrete scheme,  Benjamin-Bona-Mahony equation, Rosenau equation, Error estimates.

\setcounter{equation}{0}
\section{Introduction}\label{sec:sec1}

In this paper, we propose a semi-discrete numerical approach based on a uniform spatial discretization and truncated discrete convolution sums for the computation of solutions to  the Cauchy problem associated to the one-dimensional nonlocal nonlinear wave equation, which is a regularized conservation law,
\begin{equation}
     u_{t} +\left(\beta \ast  f(u)\right)_{x}=0,  \label{eq:cont}
\end{equation}%
with a general kernel function $\beta(x)$ and  the convolution integral
 \begin{displaymath}
    (\beta \ast v)(x)= \int_{\mathbb{R}} \beta(x-y)v(y)\mbox{d}y.
\end{displaymath}
We prove  error estimates showing  the first-order or second-order convergence, in terms of the mesh size,  depending on the smoothness of the kernel function. Also, our numerical experiments  confirm the theoretical results.

Members of the class (\ref{eq:cont}) arise  as  model equations in  different contexts of physics, from shallow water waves to elastic deformation waves in  dense lattices.   For instance, in the case of  the exponential kernel  $\beta(x)={1\over 2}e^{-|x|}$ (which is the Green's function for the differential operator $1-D^{2}_{x}$ where $D_{x}$ represents the partial derivative with respect to $x$) and $f(u)=u+u^{p+1}$ with  $p\geq 1$,  (\ref{eq:cont}) reduces to  a generalized form of the Benjamin-Bona-Mahony (BBM) equation \cite{Benjamin1972},
\begin{equation}
    u_{t}+u_{x}-u_{xtt}+(u^{p+1})_{x}=0,  \label{eq:bbm}
\end{equation}
that has been widely used  to model unidirectional surface water waves with small amplitudes and long wavelength.  On the other hand, if the kernel function $\beta$ is chosen as the  Green's function of the differential operator $1+D^{4}_{x}$,
\begin{equation}
    \beta(x)= {1\over {2\sqrt 2}}e^{-{\vert x\vert\over \sqrt 2}}\Big( \cos\big({{\vert x\vert}\over {\sqrt 2}}\big) + \sin \big({{\vert x\vert}\over {\sqrt 2}}\big) \Big ), \label{eq:ros-ker}
\end{equation}
and if $f(u)=u+g(u)$,  (\ref{eq:cont}) reduces to a generalized form of the  Rosenau equation \cite{Rosenau1988}
\begin{equation}
    u_{t} +u_{x}+u_{xxxxt}+(g(u))_{x}=0 \label{eq:rosenau}
\end{equation}
that has received much attention as a propagation model for  weakly nonlinear long waves on  one-dimensional dense crystal lattices within a quasi-continuum framework. It is worth to mention here that, in   general,  $\beta$ does not have to be  the Green's function of a differential operator. In other words, (\ref{eq:cont}) cannot always be transformed into a partial differential equation and those members might be called "genuinely nonlocal". Naturally, in such a case,  standard finite-difference schemes  will not be applicable to those equations. In this work we consider a numerical scheme based on truncated discrete convolution sums, which also solves  such genuinely nonlocal equations.

We  remark that the Fourier transform  of the kernel function gives the exact dispersion relation between the phase velocity and wavenumber of infinitesimal waves in linearized theory. In other words, general dispersive properties of waves are represented by the kernel function. Obviously the waves are nondispersive when the kernel function is the Dirac delta function. That is, (\ref{eq:cont}) can be viewed as a regularization of  the hyperbolic conservation law $u_{t}+(f(u))_{x}=0$, where the convolution integral plugged into the conservation law is the only source of dispersive effects. Naturally, our motivation in developing the present numerical scheme also stems from the need to understand the interaction between nonlinearity and nonlocal dispersion.

Recently,  two different approaches have been proposed in \cite{Borluk2017,Erbay2018} to solve numerically
the nonlocal nonlinear bidirectional wave equation  $u_{tt}=\left(\beta \ast  f(u)\right)_{xx}$. In \cite{Borluk2017} the authors have developed a semi-discrete pseudospectral Fourier method and they have proved the convergence of the method for a general  kernel function. By pointing out that, in most cases,  the kernel function is given  in physical space rather than Fourier space, in \cite{Erbay2018}  the present authors have developed a semi-discrete scheme based on spatial discretization, that can be directly applied to the bidirectional wave equation with  a general arbitrary kernel function.   They have proved  a semi-discrete error estimate for the scheme and have investigated, through numerical experiments,   the relationship between the blow-up time and the kernel function for the solutions blowing-up in finite time. These motivate us to apply a similar approach to the initial-value problem of (\ref{eq:cont}) and  to develop a convergent semi-discrete scheme that can be directly applied to  (\ref{eq:cont}). To the best of our knowledge, no efforts have been made yet to solve numerically (\ref{eq:cont}) with a general  kernel function.

As in  \cite{Erbay2018} our strategy  in obtaining  the  discrete problem is to transfer the spatial derivative in (\ref{eq:cont}) to the kernel function and to discretize the convolution integral on a uniform grid. Thus, the spatial discrete derivatives of $u$ do not appear in the resulting discrete problem. Depending on the smoothness of the kernel function, the two error estimates, corresponding to the first- and second-order accuracy in terms of the mesh size, are established for the spatially discretized solution. If  the exact solution  decreases fast enough in space,  a truncated discrete model with a finite number of degrees of freedom (that is, a finite number of grid points) can be used and, in such a case, the number of modes depends on the accuracy desired. Of course, this is another source of error in the numerical simulations and it depends on the decay behavior of the exact solution. Following the idea in \cite{Bona1981} we are able to prove a decay estimate for the exact solution under  certain conditions on the kernel function.  We also address the above issues in two model problems; propagation of  solitary waves for both the BBM equation and  the Rosenau equation.

A numerical scheme based on the discretization of an integral representation of the solution was used in \cite{Bona1981} to solve the BBM equation which is a member of the class (\ref{eq:cont}). The starting point of our numerical method is similar to that in  \cite{Bona1981}. As it was already observed in \cite{Bona1981}, an advantage of this direct approach is that  a further time discretization will not involve any stability issues regarding spatial mesh size. This is due to the fact that  our approach does not involve any spatial derivatives of the unknown function.

The paper is structured as follows. In Section \ref{sec:sec2} we focus on  the continuous Cauchy problem for (\ref{eq:cont}).  In Section \ref{sec:sec3},  the semi-discrete problem obtained by discretizing in space is presented and a short proof of the local well-posedness theorem is given. In Section \ref{sec:sec4} we investigate the convergence of the discretization error with respect to mesh size;   we prove that the convergence rate is linear or quadratic depending on the smoothness of  $\beta$. Section \ref{sec:sec5} is devoted to  analyzing the key properties of the truncation error  arising when we consider only a finite number of grid points. In Section \ref{sec:sec6} we carry out a set of numerical experiments for two specific kernels to illustrate  the theoretical results.

The notation used in the present paper is as follows.  $\Vert u\Vert_{L^p}$ is the $L^p$ ($1\leq p \leq \infty$) norm of $u$ on $\mathbb{R}$, $W^{k,p}(\mathbb{R})=\{ u\in L^p(\mathbb{R}): D^ju \in L^p(\mathbb{R}),~~ j \leq k \} $ is the $L^{p}$-based Sobolev space with the norm $\Vert u\Vert_{W^{k,p}}=\sum_{j\leq k} \|D^j u\|_{L^p},~1\leq p \leq \infty$ and  $H^{s}$ is the usual $L^{2}$-based  Sobolev space of index $s$ on $\mathbb{R}$.  $C$ denotes a generic positive constant. For a real number $s$, the symbol $[s]$ denotes the largest integer less than or equal to $s$.

\setcounter{equation}{0}
\section{The Continuous  Cauchy Problem}\label{sec:sec2}

We  consider the Cauchy problem
\begin{align}
  &  u_{t} +(\beta \ast f(u))_{x}=0, \text{ \ \ \ }  x\in \mathbb{R}\text{, \ \ }t>0, \label{eq:cont1}  \\
  &  u(x,0)=\varphi(x), \text{ \ \ \ }~~~~~~~~  x\in \mathbb{R}.  \label{eq:initial}
\end{align}%
We assume that $f$ is sufficiently smooth with $f(0)=0$ and that the kernel $\beta $ satisfies:
\begin{enumerate}[label={\bf Assumption \arabic*.}, align=left]
 {\it    \item $\beta \in L^{1}(\mathbb{R})$,

    \item $\beta ^{\prime }=\mu $ is a finite Borel measure on $\mathbb{R}$. }
\end{enumerate}

We note that Condition 2 above also includes the more regular case $\beta \in W^{1,1}(\mathbb{R})$ in which case
$d\mu=\beta^{\prime }dx$. The following theorem deals with the local well-posedness of (\ref{eq:cont1})-(\ref{eq:initial}).
\begin{theorem}\label{theo:theo2.1}
    Suppose that $\beta$ satisfies Assumptions 1 and 2. Let $s>\frac{1}{2}$, $f\in C^{[s] +1}(\mathbb{R})$ with $f(0)=0$. For a given $\varphi \in H^{s}(\mathbb{R})$, there is some $T>0$ so that the initial-value problem (\ref{eq:cont1})-(\ref{eq:initial}) is locally well-posed with solution $u\in C^{1}\left([0,T],H^{s}(\mathbb{R})\right)$.
\end{theorem}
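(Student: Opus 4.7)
The plan is to reformulate the PDE as a Banach-space ODE and apply a standard contraction-mapping existence theorem. The first step is to shift the spatial derivative from the convolution onto the kernel: since Assumption 2 states $\beta' = \mu$ is a finite Borel measure, one has, in the distributional sense,
\begin{equation*}
(\beta \ast f(u))_x = \mu \ast f(u),
\end{equation*}
which can be verified by taking Fourier transforms. The Cauchy problem then becomes the abstract initial-value problem
\begin{equation*}
u_t = -\mu \ast f(u) =: F(u), \qquad u(0) = \varphi,
\end{equation*}
and it suffices to show that $F \colon H^s(\mathbb{R}) \to H^s(\mathbb{R})$ is locally Lipschitz.

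The second step is to establish the two ingredients that make $F$ locally Lipschitz. First, convolution with a finite Borel measure is a bounded operator on $H^s$: on the Fourier side $\widehat{\mu \ast v} = \widehat{\mu}\,\widehat{v}$, and $\|\widehat{\mu}\|_{L^\infty} \leq \|\mu\|_{\mathcal{M}}$ since $\mu$ is a finite measure, so
\begin{equation*}
\|\mu \ast v\|_{H^s} \leq \|\mu\|_{\mathcal{M}}\,\|v\|_{H^s}.
\end{equation*}
Second, under the hypothesis $s > 1/2$ the space $H^s(\mathbb{R})$ is a Banach algebra continuously embedded in $L^\infty$; combined with $f \in C^{[s]+1}$ and $f(0)=0$, a standard Moser-type composition lemma yields that $v \mapsto f(v)$ maps $H^s$ into itself and satisfies
\begin{equation*}
\|f(v) - f(w)\|_{H^s} \leq C(M)\,\|v - w\|_{H^s}
\end{equation*}
whenever $\|v\|_{H^s}, \|w\|_{H^s} \leq M$. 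Composing the two bounds makes $F$ locally Lipschitz from $H^s$ to $H^s$.

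The third step is a direct appeal to the Picard--Lindel\"of theorem for ODEs in a Banach space: given $\varphi \in H^s$, there exist $T > 0$ and a unique $u \in C^1([0,T], H^s(\mathbb{R}))$ solving $u_t = F(u)$ with $u(0) = \varphi$, together with continuous dependence on initial data, which is exactly the well-posedness claim.

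The main obstacle is the composition estimate $\|f(v)-f(w)\|_{H^s} \leq C(M)\|v-w\|_{H^s}$ for non-integer $s > 1/2$, which requires the fractional Moser/chain-rule machinery and is where the regularity hypothesis $f \in C^{[s]+1}$ is used; the remaining steps (derivative transfer, measure-convolution bound, Picard iteration) are essentially routine.
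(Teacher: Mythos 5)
Your proposal is correct and follows essentially the same route as the paper: transfer the derivative to the kernel via $\beta'=\mu$, observe that convolution with a finite measure is bounded on $H^{s}$ and that $u\mapsto f(u)$ is locally Lipschitz on $H^{s}$ for $s>\tfrac12$ (the paper cites this composition estimate rather than invoking the Moser machinery explicitly), and conclude by Picard's theorem for Banach space-valued ODEs. The only difference is that you spell out the measure-convolution bound on the Fourier side, which the paper leaves implicit.
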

The proof of Theorem \ref{theo:theo2.1} follows from Picard's theorem  for Banach space-valued ODEs. The nonlinear term $f(u)$ is locally Lipschitz on $H^{s}(\mathbb{R})\cap L^{\infty }(\mathbb{R})$  \cite{Duruk2010}.  Moreover, the conditions on $\beta$ imply that the term $\left(\beta \ast  f(u)\right)_{x}=\beta^{\prime} \ast  f(u)$ maps $H^s(\mathbb{R})$ onto itself. Hence, (\ref{eq:cont1}) is an $H^{s}$-valued ODE.

We will later use the following estimate on the nonlinear term \cite{Constantin2002}.
\begin{lemma}\label{lem:lem2.2}
    Let $s\geq 0,$ $f\in C^{[s]+1}(\mathbb{R})$ with $f(0)=0$. Then for any $u\in H^{s}(\mathbb{R})\cap L^{\infty }(\mathbb{R})$, we have $f(u)\in H^{s}(\mathbb{R})\cap L^{\infty }(\mathbb{R})$. Moreover there is some constant $C(M)$ depending on $M$ such that for all $u\in H^{s}(\mathbb{R})\cap L^{\infty }(\mathbb{R})$ with $\left\Vert u\right\Vert _{L^{\infty} }\leq M$
    \begin{displaymath}
        {\left\Vert f(u)\right\Vert }_{H^{s}}\leq C(M){\left\Vert u\right\Vert }_{H^{s}}~.
    \end{displaymath}
\end{lemma}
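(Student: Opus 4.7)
The plan is to prove the $L^\infty$ bound first and then the $H^s$ bound, handling the integer case by the Faà di Bruno chain rule combined with Gagliardo–Nirenberg interpolation, and then extending to fractional $s$ by an interpolation argument.

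First I would dispose of the $L^\infty$ claim quickly. Since $f\in C^1$ with $f(0)=0$, the mean value theorem gives $\left| f(u)\right| \leq \bigl(\sup_{|\xi|\le M}\left|f'(\xi)\right|\bigr)\,|u|$, so $\|f(u)\|_{L^\infty}\le C(M)\|u\|_{L^\infty}\le C(M)M$. The same pointwise bound also handles the case $s=0$ in the main estimate, yielding $\|f(u)\|_{L^2}\le C(M)\|u\|_{L^2}$. Throughout, I will exploit the identity
\begin{equation*}
f(u)=u\,h(u),\qquad h(u)=\int_0^1 f'(\tau u)\,d\tau,
\end{equation*}
which follows from $f(0)=0$ and makes the linear dependence on $\|u\|_{H^s}$ transparent: $h$ and all its derivatives are bounded on $[-M,M]$ by constants depending only on $M$.

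Next I would treat integer $s=k\ge 1$. Applying $D^k$ to $f(u)$ and invoking the Faà di Bruno formula expresses $D^k f(u)$ as a finite sum of terms of the form $f^{(j)}(u)\prod_{i=1}^j D^{k_i}u$ with $1\le j\le k$ and $k_1+\cdots+k_j=k$. Since $\|u\|_{L^\infty}\le M$, the factor $f^{(j)}(u)$ is bounded in $L^\infty$ by a constant $C(M)$. For the product of derivatives I would apply the Gagliardo–Nirenberg inequality to each factor,
\begin{equation*}
\|D^{k_i}u\|_{L^{p_i}}\le C\,\|u\|_{L^\infty}^{1-k_i/k}\,\|D^k u\|_{L^2}^{k_i/k},
\end{equation*}
with exponents $p_i=2k/k_i$ chosen so that $\sum_i 1/p_i =1/2$, and Hölder's inequality to conclude $\|\prod_i D^{k_i}u\|_{L^2}\le C(M)\|D^k u\|_{L^2}$. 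Combined with the $L^2$ bound on $f(u)$ itself (via the factorization $f(u)=u\,h(u)$), this gives $\|f(u)\|_{H^k}\le C(M)\|u\|_{H^k}$.

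Finally, for non-integer $s$, let $k=[s]$ and write $s=(1-\theta)k+\theta(k+1)$ with $\theta\in(0,1)$. Using the estimate just proved for integers $k$ and $k+1$ (the latter requires $f\in C^{k+2}=C^{[s]+1}$, which is exactly our hypothesis on $f$), complex interpolation between $H^k$ and $H^{k+1}$ yields
\begin{equation*}
\|f(u)\|_{H^s}\le C(M)\|u\|_{H^s}.
\end{equation*}
The main obstacle I anticipate is bookkeeping in the Faà di Bruno step—ensuring that the Gagliardo–Nirenberg exponents balance correctly so that only $\|u\|_{H^k}$ (and not a higher power of it) appears on the right, with the $L^\infty$ norm absorbed into $C(M)$. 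The factorization $f(u)=u\,h(u)$ is the decisive structural input that converts what would generically be a polynomial estimate into a linear one.
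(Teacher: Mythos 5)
Your argument for integer $s=k$ is the standard Moser estimate and is sound: Fa\`a di Bruno, the $L^\infty$ bound on $f^{(j)}(u)$, Gagliardo--Nirenberg with exponents $p_i=2k/k_i$, H\"older, and the factorization $f(u)=u\int_0^1 f'(\tau u)\,d\tau$ for the $L^2$ piece (you should also note that the intermediate derivatives $D^m f(u)$, $0<m<k$, are handled the same way). The genuine gap is the final step for non-integer $s$: complex interpolation is a theorem about \emph{linear} (or analytic families of linear) operators, whereas $u\mapsto f(u)$ is nonlinear, so boundedness at the two integer endpoints does not by itself give boundedness on the intermediate space. Indeed the upper endpoint bound cannot even be applied to the given $u$, which lies only in $H^{s}\subsetneq H^{k+1}$; any interpolation argument must go through a decomposition of $u$ (the $K$-functional), i.e.\ a \emph{nonlinear} interpolation theorem of Tartar/Maligranda type, and those require endpoint \emph{Lipschitz} estimates such as $\Vert f(u)-f(v)\Vert_{H^{k}}\leq C(M)\Vert u-v\Vert_{H^{k}}$, plus care that the resulting constant depends only on $M=\Vert u\Vert_{L^\infty}$ as the lemma asserts. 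Your regularity bookkeeping also slips here: for $k=[s]$ the hypothesis gives $f\in C^{k+1}$, not $C^{k+2}$; that is enough for the $H^{k+1}$ \emph{boundedness} estimate by your own integer argument, but it is exactly the kind of margin you would lose if you tried to secure the Lipschitz endpoint estimate needed by a genuine nonlinear interpolation theorem.

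A direct repair avoids interpolation altogether: for $0<\sigma<1$ use the difference-quotient (Slobodeckij/Besov) characterization $\Vert g\Vert_{H^{\sigma}}^{2}\sim\Vert g\Vert_{L^{2}}^{2}+\int_{\mathbb{R}}|h|^{-1-2\sigma}\Vert g(\cdot+h)-g\Vert_{L^{2}}^{2}\,dh$ together with the pointwise bound $|f(u(x+h))-f(u(x))|\leq C(M)|u(x+h)-u(x)|$, which gives the claim for $0<s<1$ at once; for $s=k+\sigma$ apply $D^{k}$ via Fa\`a di Bruno as you did and estimate the $H^{\sigma}$ norm of each term with this characterization and product estimates. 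Note also that the paper offers no proof of this lemma --- it simply cites the composition estimates of Constantin (2002) --- so the relevant benchmark is the standard literature proof, which follows the direct route just sketched (or a paralinearization argument), not plain complex interpolation.
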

We emphasize that the bound in the lemma depends only on the $L^{\infty}$-norm of $u$. This in turn allows us to control the $H^{s}$-norm of the solution by its $L^{\infty}$-norm. In particular, finite time blow-up of solutions is independent of regularity and is controlled only by the $L^{\infty}$-norm of $u$.
\begin{lemma}\label{lem:lem2.3}
    Suppose the conditions of Theorem \ref{theo:theo2.1} are satisfied and $u\in C^{1}\left([0,T],H^{s}(\mathbb{R})\right)$ is the solution of  (\ref{eq:cont1})-(\ref{eq:initial}). Then
    \begin{equation}
        {\left\Vert u(t)\right\Vert }_{H^{s}}\leq \left\Vert \varphi \right\Vert_{H^{s}}e^{Ct},~~~~0\leq t\leq T,
        \label{growth23}
    \end{equation}
    where $C$ depends on  $M=\sup_{0\leq t\leq T}\left\Vert u(t)\right\Vert_{L^{\infty}}$ and $\beta$.
\end{lemma}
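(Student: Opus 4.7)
The plan is to differentiate the $H^s$-norm of $u$ in time, bound the result using the equation itself, and close the argument with Gronwall. First I would rewrite the equation in the form
\begin{equation*}
u_t = -(\beta \ast f(u))_x = -\mu \ast f(u),
\end{equation*}
where Assumption 2 guarantees that $\mu = \beta'$ is a finite Borel measure on $\mathbb{R}$. Since convolution with a finite measure is a bounded operator on $H^s(\mathbb{R})$ (its Fourier multiplier $\widehat{\mu}$ is bounded by the total variation $\|\mu\|_M$), we obtain the key estimate
\begin{equation*}
\|\mu \ast g\|_{H^s} \le \|\mu\|_M \, \|g\|_{H^s} \qquad \text{for all } g\in H^s(\mathbb{R}).
\end{equation*}

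Next I would use the hypothesis that $M = \sup_{0\le t\le T}\|u(t)\|_{L^\infty}$ is finite, together with Lemma \ref{lem:lem2.2}, to get
\begin{equation*}
\|f(u(t))\|_{H^s} \le C(M)\,\|u(t)\|_{H^s}, \qquad 0\le t\le T.
\end{equation*}
Combining the two bounds and applying them to the equation yields
\begin{equation*}
\|u_t(t)\|_{H^s} \le \|\mu\|_M \, C(M)\,\|u(t)\|_{H^s} = C\,\|u(t)\|_{H^s},
\end{equation*}
where $C$ depends only on $M$ and $\beta$ (through $\|\mu\|_M$).

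To pass from this differential inequality to the claimed exponential bound, I would use that $u \in C^1([0,T], H^s(\mathbb{R}))$ to justify that $\|u(t)\|_{H^s}$ is absolutely continuous in $t$ and satisfies
\begin{equation*}
\frac{d}{dt}\|u(t)\|_{H^s} \le \|u_t(t)\|_{H^s} \le C\,\|u(t)\|_{H^s}.
\end{equation*}
Gronwall's inequality then immediately gives $\|u(t)\|_{H^s} \le \|\varphi\|_{H^s} e^{Ct}$ on $[0,T]$.

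The only point requiring mild care is the estimate $\frac{d}{dt}\|u(t)\|_{H^s} \le \|u_t(t)\|_{H^s}$; if one prefers to avoid differentiating the norm directly, the same conclusion follows by squaring and computing $\frac{d}{dt}\|u\|_{H^s}^2 = 2\,\langle u, u_t\rangle_{H^s} \le 2\,\|u\|_{H^s}\|u_t\|_{H^s} \le 2C\,\|u\|_{H^s}^2$ and integrating. Neither step presents a real obstacle — the whole argument is a direct Gronwall closure once the $H^s$-boundedness of convolution with $\mu$ is in hand.
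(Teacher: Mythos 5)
Your proposal is correct and follows essentially the same route as the paper: both rest on the identity $(\beta\ast f(u))_x=\beta'\ast f(u)$, the bound $\Vert\mu\ast g\Vert_{H^s}\leq \vert\mu\vert(\mathbb{R})\,\Vert g\Vert_{H^s}$ (Young's inequality for a finite measure), Lemma \ref{lem:lem2.2}, and a Gronwall closure. The only difference is cosmetic: the paper integrates the equation in time and applies the integral form of Gronwall, while you differentiate the norm and use the differential form, which you justify adequately via the squared-norm computation.
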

\begin{proof}
    Integrating (\ref{eq:cont1}) with respect to time, we get
    \begin{equation}
        u(x,t)=\varphi(x)-\int_{0}^{t}\big(\beta^{\prime}\ast f(u)\big)(x,\tau)d\tau.
    \end{equation}
    Using Young's inequality  and Lemma \ref{lem:lem2.2},  we obtain
    \begin{equation}
        \Vert u(t)\Vert_{H^s}\leq \Vert \varphi \Vert_{H^{s}}+C(M, \beta)\int_{0}^{t}\Vert u(\tau)\Vert_{H^{s}} d\tau
    \end{equation}
    for all $t\in [0, T]$.  By  Gronwall's lemma this gives (\ref{growth23}).
\end{proof}
\begin{remark}
It is well-known that, under suitable convexity assumptions, the hyperbolic conservation law $u_{t}+(f(u))_{x}=0$ leads to shock formation in a finite time even for smooth initial conditions.  On the other hand, according to Lemma \ref{lem:lem2.3}, for any solution $u$ of  (\ref{eq:cont}) the derivatives will stay bounded as long as $u(t)$ stays bounded in the $L^{\infty}$-norm. In other words, the regularization prevents a shock formation.
\end{remark}

\setcounter{equation}{0}
\section{The  Discrete  Problem}\label{sec:sec3}

In this section we  first give two lemmas for error estimates of discretizations of integrals and derivatives on an infinite uniform grid, respectively, and then introduce the discrete problem associated to  (\ref{eq:cont1})-(\ref{eq:initial}).

\subsection{Discretization and Preliminary Lemmas}

Consider doubly infinite sequences $\mathbf{w}=(w_{i})_{i=-\infty}^{i=\infty}=(w_{i})$ of real numbers  $w_{i}$ with $i\in\mathbb{Z}$  (where $\mathbb{Z}$ denotes the set of integers). For a fixed $h>0$ and $1\leq p<\infty $, the  $l_{h}^{p}(\mathbb{Z})$ space is defined as
\begin{displaymath}
   l_{h}^{p}\left(\mathbb{Z}\right)=\left\{ (w_{i}): w_{i}\in \mathbb{R}, ~~
        \Vert \mathbf{w}\Vert_{l_{h}^{p}}^{p}=\sum_{i=-\infty }^{\infty}h|w_{i}|^{p}\right\}.
\end{displaymath}
The $l^{\infty}(\mathbb{Z})$ space with the sup-norm $\displaystyle \Vert \mathbf{w}\Vert_{l^{\infty}}=\sup_{i \in\mathbb{Z}} \left \vert w_{i} \right \vert$ is a Banach space.  The discrete convolution operation denoted by the symbol $*$ transforms two sequences  $\mathbf{w}$ and $\mathbf{v}$ into a new sequence:
\begin{equation}
    (\mathbf{w}\ast \mathbf{v})_{i}=\sum_{j}hw_{i-j}v_{j} \label{eq:disc-con}
\end{equation}%
(henceforth, we use $\displaystyle \sum_{j}$ to denote summation over all $j\in\mathbb{Z}$). Young's inequality for convolution integrals state that $\Vert \mathbf{w}\ast \mathbf{v}\Vert_{l_{h}^{p}}\leq \Vert \mathbf{w}\Vert _{l_{h}^{1}}\Vert \mathbf{v}\Vert _{l_{h}^{p}}$ for
$\mathbf{w}\in l_{h}^{1}, \mathbf{v}\in l_{h}^{p}$, $1\leq p <\infty $ and $\Vert \mathbf{w}\ast \mathbf{v}\Vert _{l^{\infty} }\leq \Vert \mathbf{w}\Vert_{l_{h}^{1}}\Vert \mathbf{v}\Vert_{l^{\infty}}$ for $\mathbf{w}\in l_{h}^{1},\mathbf{v}\in l^{\infty }$.

Consider a function $w$ of one variable $x$  defined on $\mathbb{R}$. We then introduce a uniform partition of the real line with the mesh size $h$ and with the grid points $x_{i}=ih$, $i\in \mathbb{Z}$. Let the restriction operator $\mathbf{R}$ be   $\mathbf{R}w=(w(x_{i}))$.  We will henceforth use the abbreviations $\mathbf{w}$ and $\mathbf{w}^{\prime }$ for $\mathbf{R}w$ and  $\mathbf{R}w^{\prime }$,  respectively.

The following lemma gives the error bounds for the discrete approximations of the integral over $\mathbb{R}$ depending on the smoothness of the integrand. We note that the two cases  correspond to the rectangular and trapezoidal approximations, respectively.
\begin{lemma}\label{lem:lem3.1} ~
    \begin{enumerate}[label={\bf \upshape(\alph*)}]
    \item
        Let $w\in L^{1}(\mathbb{R})$ and  $w^{\prime}=\mu$ be a finite measure  on $\mathbb{R}$. Then
        \begin{equation}
            \left\vert \int_\mathbb{R} w(x)dx-\sum_{i}h w(x_{i})\right \vert \leq h\vert \mu\vert (\mathbb{R}). \label{eq:intA}
        \end{equation}%
    \item
        Let $w\in W^{1,1}(\mathbb{R})$ and  $w^{\prime\prime}=\nu$ be a finite measure  on $\mathbb{R}$. Then
        \begin{equation}
            \left\vert \int_\mathbb{R} w(x)dx-\sum_{i}h w(x_{i})\right \vert \leq h^{2}\vert \nu\vert (\mathbb{R}). \label{eq:intB}
        \end{equation}
    \end{enumerate}
\end{lemma}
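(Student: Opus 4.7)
My overall strategy is to split the integral $\int_{\mathbb{R}} w\,dx$ into a sum of integrals over the grid cells $[x_i, x_{i+1}]$, compare each with the corresponding term $h w(x_i)$ in the Riemann sum, and bound the local errors by the total variation of $\mu$ or $\nu$ restricted to that cell. Under either set of hypotheses, $w$ is (a.e.\ equal to) a bounded variation function that vanishes at $\pm\infty$, so all sums involved are absolutely convergent and any boundary terms at infinity disappear.

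For part (a), the local error on $[x_i,x_{i+1}]$ is
\begin{displaymath}
E_i = \int_{x_i}^{x_{i+1}} \bigl(w(x)-w(x_i)\bigr)\,dx.
\end{displaymath}
Since $w$ has distributional derivative $\mu$, one may write $w(x)-w(x_i)=\mu([x_i,x))$ for the BV representative. Applying Fubini's theorem yields
\begin{displaymath}
E_i = \int_{[x_i,x_{i+1})} (x_{i+1}-y)\,d\mu(y),
\end{displaymath}
so that $|E_i|\le h\,|\mu|([x_i,x_{i+1}))$. Summing over $i\in\mathbb{Z}$ telescopes the right side to $h\,|\mu|(\mathbb{R})$, which is (\ref{eq:intA}).

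For part (b), I would first exploit the telescoping identity
\begin{displaymath}
\sum_{i} h w(x_i) = \sum_{i} \tfrac{h}{2}\bigl(w(x_i)+w(x_{i+1})\bigr),
\end{displaymath}
valid because $w\in W^{1,1}(\mathbb{R})$ forces $w(x_i)\to 0$ as $|i|\to\infty$, so partial boundary terms vanish in the limit. The local error then becomes the trapezoidal-rule error. Introducing the Peano kernel $\phi(x)=\tfrac{1}{2}(x-x_i)(x_{i+1}-x)$, which vanishes at both endpoints of the cell and satisfies $\phi''(x)=-1$ together with $0\le \phi(x)\le h^2/8$, two integrations by parts give
\begin{displaymath}
\int_{x_i}^{x_{i+1}} w\,dx - \tfrac{h}{2}\bigl(w(x_i)+w(x_{i+1})\bigr) = \int_{[x_i,x_{i+1})} \phi(x)\,d\nu(x).
\end{displaymath}
Hence the local error is bounded by $\tfrac{h^2}{8}\,|\nu|([x_i,x_{i+1}))$, and summing yields (\ref{eq:intB}) with room to spare on the constant.

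The main technical obstacle is the double integration by parts when $w''=\nu$ is only a measure rather than an $L^1$ function. This step is justified by the fact that $w'\in BV(\mathbb{R})$ has $\nu$ as its distributional derivative, together with the vanishing of $\phi$ at the endpoints $x_i,x_{i+1}$, which kills the boundary contributions of the second IBP. A minor bookkeeping point is to use the half-open cells $[x_i,x_{i+1})$ (which partition $\mathbb{R}$) when summing the total-variation bounds, so that atoms of $\mu$ or $\nu$ sitting at grid points are not double-counted; the kernel $\phi$ vanishes at $x_{i+1}$, so this change does not affect the local estimates.
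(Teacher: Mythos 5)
Your proof is correct and takes essentially the route the paper intends: the paper itself defers the proof of this lemma to \cite{Erbay2018} but explicitly describes the two cases as the rectangular and trapezoidal approximations, which is exactly your cell-by-cell estimate $E_i=\int_{[x_i,x_{i+1})}(x_{i+1}-y)\,d\mu(y)$ for (a) and your reindexing of $\sum_i h w(x_i)$ into the trapezoid sum followed by the Peano-kernel bound for (b). Only trivia: the trapezoid identity should read $\int_{x_i}^{x_{i+1}} w\,dx-\tfrac{h}{2}\bigl(w(x_i)+w(x_{i+1})\bigr)=-\int \phi\,d\nu$, and the absolute summability of $\sum_i h\,w(x_i)$ is most cleanly justified by your own cell estimates (BV plus decay alone is not quite an argument); neither point affects the bounds.
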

The following lemma handles the $l^{\infty}$ estimates for the discrete approximation of the first derivative  $w^{\prime}$, whose proof follows more or less  standard lines.
\begin{lemma}\label{lem:lem3.2} ~
    Let $\mathbf{w}=\mathbf{R}w$ and $\mathbf{w}^{\prime }=\mathbf{R}w^{\prime }$. Let $D$ be the discrete  derivative operator defined by the central differences
    \begin{equation}
    (D\mathbf{w})_{i}={1\over 2h}(w_{i+1}-w_{i-1}), ~~~~i\in\mathbb{Z}.
    \end{equation}
    \begin{enumerate}[label={\bf \upshape(\alph*)}]
    \item
        If $w\in W^{2,\infty}(\mathbb{R})$, then
        \begin{equation}
            \Vert D\mathbf{w}-\mathbf{w}^{\prime }\Vert_{l^{\infty}}\leq \frac{h}{2}
                \Vert w^{\prime\prime}\Vert_{L^{\infty}} .  \label{eq:lemma32a}
        \end{equation}%
    \item
        If $w\in W^{3,\infty}(\mathbb{R})$, then
        \begin{equation}
            \Vert D\mathbf{w}-\mathbf{w}^{\prime }\Vert_{l^{\infty}}\leq \frac{h^{2}}{6}
                \Vert w^{\prime\prime\prime}\Vert_{L^{\infty}}. \label{eq:lemma32b}
        \end{equation}%
    \end{enumerate}
\end{lemma}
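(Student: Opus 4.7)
The plan is to prove both parts by Taylor expansion with the integral form of the remainder at each grid point $x_i$, exploiting the cancellation structure of the central difference. The key observation is that, when we subtract $w(x_i-h)$ from $w(x_i+h)$, the even-order Taylor terms cancel while the $w'(x_i)$ term survives doubled; this is what makes central differences one order more accurate than a forward or backward difference, and it is also why part (b) requires the additional regularity $W^{3,\infty}$.

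For part (a), I would write, using the first-order integral remainder,
\begin{equation*}
w(x_i\pm h)=w(x_i)\pm h\,w'(x_i)+R_{\pm}(i),\qquad R_{\pm}(i)=\int_{x_i}^{x_i\pm h}(x_i\pm h-s)\,w''(s)\,ds,
\end{equation*}
which is valid since $w''\in L^{\infty}(\mathbb{R})$. Subtracting and dividing by $2h$ gives
\begin{equation*}
(D\mathbf{w})_i-w'(x_i)=\frac{R_{+}(i)-R_{-}(i)}{2h}.
\end{equation*}
Pulling $\|w''\|_{L^{\infty}}$ out of each integral and computing $\int_{0}^{h}t\,dt=h^{2}/2$ on each side yields the bound $\frac{1}{2h}\bigl(\tfrac{h^{2}}{2}+\tfrac{h^{2}}{2}\bigr)\|w''\|_{L^{\infty}}=\tfrac{h}{2}\|w''\|_{L^{\infty}}$, uniformly in $i$, from which the $l^{\infty}$ estimate \eqref{eq:lemma32a} follows.

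For part (b), I would go one order higher:
\begin{equation*}
w(x_i\pm h)=w(x_i)\pm h\,w'(x_i)+\tfrac{h^{2}}{2}w''(x_i)+\widetilde{R}_{\pm}(i),\qquad \widetilde{R}_{\pm}(i)=\tfrac{1}{2}\int_{x_i}^{x_i\pm h}(x_i\pm h-s)^{2}\,w'''(s)\,ds.
\end{equation*}
The $\tfrac{h^{2}}{2}w''(x_i)$ contributions cancel in the subtraction, leaving
\begin{equation*}
(D\mathbf{w})_i-w'(x_i)=\frac{\widetilde{R}_{+}(i)-\widetilde{R}_{-}(i)}{2h},
\end{equation*}
and bounding by $\|w'''\|_{L^{\infty}}$ together with $\int_{0}^{h}t^{2}\,dt=h^{3}/3$ on each side gives $\frac{1}{4h}\bigl(\tfrac{h^{3}}{3}+\tfrac{h^{3}}{3}\bigr)\|w'''\|_{L^{\infty}}=\tfrac{h^{2}}{6}\|w'''\|_{L^{\infty}}$, which is \eqref{eq:lemma32b}.

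There is no serious obstacle: the argument is entirely elementary once the integral form of the remainder is used. The only points requiring mild care are getting the signs right when the interval of integration has $x_i-h<x_i$ (so one should rewrite $\int_{x_i}^{x_i-h}=-\int_{x_i-h}^{x_i}$ and note that $(x_i-h-s)^{k}$ retains a definite sign on the resulting interval for $k=1,2$), and justifying the Taylor expansion for $w\in W^{k,\infty}(\mathbb{R})$, which is standard since $w^{(k-1)}$ is absolutely continuous and $w^{(k)}\in L^{\infty}$.
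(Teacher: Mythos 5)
Your proof is correct: the Taylor expansions with integral remainder are valid for $w\in W^{2,\infty}$ and $W^{3,\infty}$ (the relevant derivative being absolutely continuous), the sign bookkeeping on $[x_i-h,x_i]$ works out, and the constants $\tfrac{h}{2}$ and $\tfrac{h^2}{6}$ come out exactly as claimed. The paper itself omits the proof and refers to its earlier work, noting it follows standard lines, and your argument is precisely that standard central-difference Taylor argument, so there is nothing further to reconcile.
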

We refer the reader to \cite{Erbay2018} for the proofs of the two lemmas above.

\subsection{The Semi-Discrete Problem}

In order to get the semi-discrete problem associated with (\ref{eq:cont1})-(\ref{eq:initial}) we discretize them in space with a fixed mesh size $h>0$.  Thus, the discretized form of the nonlocal wave equation (\ref{eq:cont1}) becomes
\begin{equation}
    \frac{d \mathbf{v}}{dt} =-D\big(\bm{\beta}_{h}\ast f(\mathbf{v})\big)  \label{eq:disc}
\end{equation}%
with the notation  $f(\mathbf{v})=(f(v_{i}))$ and $\bm{\beta}_{h}=\mathbf{R}\beta$. The identity $D(\bm{\beta}_{h}\ast f(\mathbf{v}))=(D \bm{\beta}_{h})\ast f(\mathbf{v})$ allows us to transfer the discrete derivative on the kernel. So in order to prove the local well-posedness theorem for the semi-discrete problem, we need merely prove the following lemma which estimates
the discrete derivative $D\bm{\beta}_{h}$ of the restriction $\bm{\beta}_{h}$ of the kernel.
\begin{remark}\label{rem:rem3.2} ~
    We note that (\ref{eq:disc}) involves point values of $\beta$. When $\beta$ satisfies Assumptions 1 and 2 one should pay attention to how the point values in  (\ref{eq:disc}) are defined. To clarify this issue we will assume throughout that
    \begin{equation}
    \beta(x)=\int_{(-\infty, x]} d\mu.   \label{eq:bbb}
    \end{equation}
\end{remark}
\begin{lemma}\label{lem:lem3.3}
    Let $\beta^{\prime}=\mu$ be a finite measure on $\mathbb{R}$. Then $D\bm{\beta}_{h}\in l_{h}^{1}$ and $\Vert D\bm{\beta}_{h}\Vert _{l_{h}^{1}}\leq |\mu| (\mathbb{R}).$
\end{lemma}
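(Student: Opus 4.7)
The plan is to compute $D\bm{\beta}_h$ explicitly using the representation (\ref{eq:bbb}), and then to reduce the $l_h^1$ norm estimate to a statement about how many times each point of $\mathbb{R}$ is covered by the family of intervals $(x_{i-1},x_{i+1}]$.

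First I would exploit the representation $\beta(x)=\int_{(-\infty,x]} d\mu$ to write, for each $i\in\mathbb{Z}$,
\[
\beta(x_{i+1})-\beta(x_{i-1})=\mu\bigl((x_{i-1},x_{i+1}]\bigr),
\]
so that the central difference becomes
\[
(D\bm{\beta}_h)_i=\frac{1}{2h}\,\mu\bigl((x_{i-1},x_{i+1}]\bigr).
\]
Taking absolute values and using $|\mu(E)|\leq |\mu|(E)$ for any Borel set $E$, one obtains
\[
\|D\bm{\beta}_h\|_{l_h^1}=\sum_i h\,|(D\bm{\beta}_h)_i|\leq \frac{1}{2}\sum_i |\mu|\bigl((x_{i-1},x_{i+1}]\bigr).
\]

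The remaining step is a simple covering count: the intervals $(x_{i-1},x_{i+1}]$ have length $2h$ and are arranged so that each $x\in\mathbb{R}$ lies in exactly two of them (the ones indexed by the two consecutive values of $i$ for which $x_{i-1}<x\leq x_{i+1}$). By monotone convergence (or Fubini on $\sum_i \mathbf{1}_{(x_{i-1},x_{i+1}]}=2$ integrated against $d|\mu|$), this gives
\[
\sum_i |\mu|\bigl((x_{i-1},x_{i+1}]\bigr)=2\,|\mu|(\mathbb{R}),
\]
which combined with the preceding inequality yields the desired bound $\|D\bm{\beta}_h\|_{l_h^1}\leq |\mu|(\mathbb{R})$, and in particular the finiteness $D\bm{\beta}_h\in l_h^1$.

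There is no real obstacle here; the only mild subtlety is being careful that the half-open intervals $(x_{i-1},x_{i+1}]$ tile $\mathbb{R}$ with exact multiplicity two (so that endpoints carrying atoms of $\mu$ are counted correctly), which is precisely why the convention (\ref{eq:bbb}) for point values of $\beta$ is adopted in Remark \ref{rem:rem3.2}.
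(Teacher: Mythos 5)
Your proof is correct and is essentially the paper's own argument: both use the convention $\beta(x)=\int_{(-\infty,x]}d\mu$ to write $h(D\bm{\beta}_h)_i=\tfrac12\mu\bigl((x_{i-1},x_{i+1}]\bigr)$ and then sum, using the fact that the half-open intervals $(x_{i-1},x_{i+1}]$ cover $\mathbb{R}$ with multiplicity two. Your explicit covering count (and the Tonelli justification) simply spells out the step the paper leaves implicit in its final equality.
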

\begin{proof}
    The assumption (\ref{eq:bbb}) allows one to write
    \begin{align}
        h(D\bm{\beta}_{h})_{i}
        &={1\over 2}\big(\beta (x_{i+1})-\beta (x_{i-1})\big )\\
        & ={1\over 2}\int_{(x_{i-1}, x_{i+1}]} d\mu
        \leq{1\over 2}\vert \mu\vert \big((x_{i-1}, x_{i+1}]\big),
    \end{align}%
    from which we deduce the estimate
    \begin{displaymath}
        \Vert D\bm{\beta}_{h}\Vert_{l_{h}^{1}}
            =\sum_{i}h \left\vert (D\bm{\beta}_{h})_{i}\right\vert
            \leq {1\over 2}\sum_{i} \vert \mu\vert \big((x_{i-1}, x_{i+1}]\big) = \vert \mu\vert \big(\mathbb{R}\big).
    \end{displaymath}%
    \end{proof}

Let $f$  be a locally Lipschitz function with $f(0)=0$. The map $\mathbf{v}\longrightarrow f(\mathbf{v})$ is locally Lipschitz on $l^{\infty}$. Moreover, by Lemma \ref{lem:lem3.3}, the map
\begin{displaymath}
        \mathbf{v}\longrightarrow D\big(\bm{\beta}_{h}\ast f(\mathbf{v})\big)
\end{displaymath}%
is also locally Lipschitz on $l^{\infty}$. By Picard's theorem on Banach spaces, this implies the local well-posedness of the initial-value problem for (\ref{eq:disc}).
\begin{theorem}\label{theo:theo3.4}
    Let $f$  be a locally Lipschitz function with $f(0)=0$. Then the initial-value problem for (\ref{eq:disc}) is locally well-posed for initial data $\mathbf{v}(0)$ in $l^{\infty}$. Moreover there exists some maximal time $T_{h}>0$ so that the problem has unique solution $\mathbf{v}\in C^{1}([0,T_{h}),l^{\infty})$. The maximal time $T_{h}$, if finite, is determined by the blow-up condition
    \begin{equation}
        \limsup_{t\rightarrow T_{h}^{-}}\Vert \mathbf{v}(t)\Vert_{l^{\infty} }=\infty .  \label{eq:blow2a}
    \end{equation}
\end{theorem}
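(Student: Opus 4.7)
The plan is to apply the Banach-space version of Picard's existence theorem to equation (\ref{eq:disc}) viewed as an ODE on $l^{\infty}$. Using the identity $D(\bm{\beta}_{h}\ast f(\mathbf{v})) = (D\bm{\beta}_{h})\ast f(\mathbf{v})$ already noted in the text, I would rewrite (\ref{eq:disc}) as $\frac{d\mathbf{v}}{dt} = F(\mathbf{v})$, where $F(\mathbf{v}) := -(D\bm{\beta}_{h})\ast f(\mathbf{v})$. With this reformulation the initial-value problem becomes a standard first-order Banach-space ODE, and everything reduces to showing that $F$ maps $l^{\infty}$ into itself and is locally Lipschitz there.

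The local Lipschitz property follows by combining three ingredients. First, since $f$ is locally Lipschitz, its componentwise action $\mathbf{v}\mapsto f(\mathbf{v})=(f(v_{i}))$ inherits the Lipschitz property on $l^{\infty}$: whenever $\Vert\mathbf{u}\Vert_{l^{\infty}},\Vert\mathbf{v}\Vert_{l^{\infty}}\leq M$ one has $\Vert f(\mathbf{u})-f(\mathbf{v})\Vert_{l^{\infty}}\leq L(M)\Vert\mathbf{u}-\mathbf{v}\Vert_{l^{\infty}}$, where $L(M)$ is any Lipschitz constant of $f$ on $[-M,M]$; the hypothesis $f(0)=0$ further gives $\Vert f(\mathbf{v})\Vert_{l^{\infty}}\leq L(M)\Vert\mathbf{v}\Vert_{l^{\infty}}$, so that $f(\mathbf{v})\in l^{\infty}$. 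Second, Young's inequality for discrete convolutions in the $l_{h}^{1}\ast l^{\infty}$ form gives $\Vert (D\bm{\beta}_{h})\ast\mathbf{w}\Vert_{l^{\infty}}\leq \Vert D\bm{\beta}_{h}\Vert_{l_{h}^{1}}\Vert\mathbf{w}\Vert_{l^{\infty}}$. Third, Lemma \ref{lem:lem3.3} supplies the uniform bound $\Vert D\bm{\beta}_{h}\Vert_{l_{h}^{1}}\leq |\mu|(\mathbb{R})$. Chaining these together yields $F:l^{\infty}\to l^{\infty}$ with $\Vert F(\mathbf{u})-F(\mathbf{v})\Vert_{l^{\infty}}\leq |\mu|(\mathbb{R})\,L(M)\,\Vert\mathbf{u}-\mathbf{v}\Vert_{l^{\infty}}$ on the $l^{\infty}$-ball of radius $M$.

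With local Lipschitz continuity in hand, the Picard--Lindel\"of theorem for ODEs in Banach spaces produces a unique maximal solution $\mathbf{v}\in C^{1}([0,T_{h}),l^{\infty})$. For the blow-up alternative (\ref{eq:blow2a}), I would argue by contradiction in the standard continuation fashion: suppose $T_{h}<\infty$ but $K:=\limsup_{t\to T_{h}^{-}}\Vert\mathbf{v}(t)\Vert_{l^{\infty}}$ were finite. Then the trajectory remains inside the $(K+1)$-ball of $l^{\infty}$ on some interval $[T_{h}-\eta,T_{h})$. Since the Lipschitz constant of $F$ on this ball is the single number $|\mu|(\mathbb{R})\,L(K+1)$, independent of the base point, Picard's theorem delivers a local existence time $\delta>0$ depending only on $K$; restarting the flow from $\mathbf{v}(t_{0})$ at any $t_{0}$ with $T_{h}-t_{0}<\delta$ then extends the solution past $T_{h}$, contradicting maximality. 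The only mildly delicate point is this uniform-in-base-point existence time in the continuation argument; all of the remaining work is a direct assembly of the preceding lemmas.
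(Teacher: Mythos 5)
Your argument is essentially the paper's own proof: transfer the discrete derivative onto the kernel via $D(\bm{\beta}_{h}\ast f(\mathbf{v}))=(D\bm{\beta}_{h})\ast f(\mathbf{v})$, bound $\Vert D\bm{\beta}_{h}\Vert_{l_{h}^{1}}$ by Lemma \ref{lem:lem3.3}, combine with discrete Young's inequality and the componentwise local Lipschitz property of $f$ to get a locally Lipschitz vector field on $l^{\infty}$, and invoke Picard's theorem for Banach-space ODEs. The only difference is that you spell out the standard continuation argument for the blow-up alternative (\ref{eq:blow2a}), which the paper leaves implicit in the Picard theory; both steps are correct.
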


\setcounter{equation}{0}
\section{ Discretization Error}\label{sec:sec4}

Suppose that the function $u\in C^{1}\left([0,T],H^{s}(\mathbb{R})\right)$ with sufficiently large $s$  is the unique solution of the continuous problem  (\ref{eq:cont1})-(\ref{eq:initial}). The discretizations of the initial data $\varphi$ on the uniform infinite grid will be denoted by  $\bm{\varphi}_{h}=\mathbf{R}\varphi$. Let   $\mathbf{u}_{h}\in C^{1}\left([0,T_{h}),l^{\infty}\right)$ be the unique solution of  the discrete problem based on (\ref{eq:disc}) and  the initial data $\bm{\varphi}_{h}$. The aim of this section is to estimate the discretization error defined as $\mathbf{R}u(t)-\mathbf{u}_{h}$. Depending on the conditions imposed on the kernel function $\beta$, we  provide the two different theorems establishing the first- and second-order convergence in $h$. The proofs  follow  similar lines as the corresponding ones in \cite{Erbay2018}.
\begin{theorem}\label{theo:theo4.1}
    Suppose that $\beta$ satisfies Assumptions 1 and 2.     Let $s>\frac{5}{2}$, $ f\in C^{[ s] +1}(\mathbb{R})$ with $f(0)=0$.  Let $u\in C^{1}\left([0,T], H^{s}(\mathbb{R})\right)$ be the solution of the initial-value problem (\ref{eq:cont1})-(\ref{eq:initial}) with $~\varphi \in H^{s}(\mathbb{R})$. Similarly, let  $\mathbf{u}_{h}\in C^{1}\left([0,T_{h}),l^{\infty}\right)$ be the solution of  (\ref{eq:disc}) with initial data $\bm{\varphi}_{h}$.   Let $\mathbf{u}(t)=\mathbf{R}u(t) =(u(x_{i},t))$. Then there is some $h_{0}$ so that for $h\leq h_{0}$, the maximal existence time $T_{h}$ of $\mathbf{u}_{h}$ is at least $T$ and
    \begin{equation}
        \Vert \mathbf{u}(t)-\mathbf{u}_{h}(t)\Vert_{l^{\infty}}=\mathcal{O}(h)    \label{eq:fourone}
    \end{equation}
    for all $t\in \lbrack 0,T \rbrack$.
\end{theorem}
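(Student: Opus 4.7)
The plan is the standard consistency-plus-stability framework for semi-discretizations, combined with a bootstrap to secure $T_h\geq T$ a posteriori. Setting $\mathbf{e}_h=\mathbf{u}-\mathbf{u}_h$, which starts at zero, and inserting $\pm D\bm{\beta}_h\ast f(\mathbf{u})$ into the difference of the continuous and discrete equations gives
\begin{equation*}
\frac{d\mathbf{e}_h}{dt}=\bm{\tau}_h(t)+D\bm{\beta}_h\ast[f(\mathbf{u}_h)-f(\mathbf{u})],\qquad \bm{\tau}_h(t)_i=\bigl(D\bm{\beta}_h\ast f(\mathbf{u}(t))\bigr)_i-V_x(x_i,t),
\end{equation*}
with $V=\beta\ast f(u)$. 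The stability half is easy: Lemma \ref{lem:lem3.3} gives $\|D\bm{\beta}_h\|_{l_h^1}\leq|\mu|(\mathbb{R})$, so on any subinterval where $\|\mathbf{u}_h\|_{l^\infty}$ stays bounded, the local Lipschitz constant of $f$ together with Young's inequality yields $\|D\bm{\beta}_h\ast[f(\mathbf{u}_h)-f(\mathbf{u})]\|_{l^\infty}\leq C\|\mathbf{e}_h\|_{l^\infty}$.

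The main obstacle is the $\mathcal{O}(h)$ consistency bound. The naive route --- estimate the quadrature error $(\bm{\beta}_h\ast f(\mathbf{u}))_i-V(x_i)$ by Lemma \ref{lem:lem3.1}(a) and then apply the central-difference operator --- loses a factor $h$ through the $1/(2h)$ prefactor in $D$ and returns only $\mathcal{O}(1)$. The way around this is the discrete integration by parts identity
\begin{equation*}
(D\bm{\beta}_h)\ast\mathbf{g}=\bm{\beta}_h\ast(D\mathbf{g}),
\end{equation*}
which follows from a one-line index shift in (\ref{eq:disc-con}), together with the continuous integration by parts $V_x(x_i)=\int\beta(x_i-y)(f(u))'(y)\,dy$, valid because $\beta\in L^1$ and the assumption $s>5/2$ embeds $H^s$ into $C_b^2$, so $(f(u))'$ and $(f(u))''$ are in $L^\infty$. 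Splitting
\begin{align*}
\bm{\tau}_h(t)_i
&=\Bigl[\sum_j h\beta(x_{i-j})(f(u))'(x_j)-\int\beta(x_i-y)(f(u))'(y)\,dy\Bigr]\\
&\quad+\sum_j h\beta(x_{i-j})\bigl[(Df(\mathbf{u}))_j-(f(u))'(x_j)\bigr],
\end{align*}
the first bracket is a quadrature error controlled by Lemma \ref{lem:lem3.1}(a) applied to $w(y)=\beta(x_i-y)(f(u))'(y)$, whose distributional derivative has total variation bounded by $\|(f(u))'\|_{L^\infty}|\mu|(\mathbb{R})+\|\beta\|_{L^1}\|(f(u))''\|_{L^\infty}$ uniformly in $i$. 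The second bracket is estimated by Lemma \ref{lem:lem3.2}(a) applied to $f(u)$, together with the easy bound $\|\bm{\beta}_h\|_{l_h^1}\leq\|\beta\|_{L^1}+h|\mu|(\mathbb{R})$ from Lemma \ref{lem:lem3.1}(a). Both contributions are $\mathcal{O}(h)$ uniformly in $i$ and $t\in[0,T]$, with constants depending only on $\beta$, $f$ and $\sup_{[0,T]}\|u(t)\|_{H^s}$, which is finite by Lemma \ref{lem:lem2.3}.

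With $\|\bm{\tau}_h\|_{l^\infty}\leq Ch$ and the stability bound in hand, Gronwall's lemma yields $\|\mathbf{e}_h(t)\|_{l^\infty}\leq Che^{Ct}$ on any subinterval where $\|\mathbf{u}_h\|_{l^\infty}$ remains bounded. To avoid circularity I would close the argument with a standard bootstrap. Fix $M=2\sup_{[0,T]}\|u(t)\|_{L^\infty}$ and set $T^\ast=\sup\{t\in[0,\min(T,T_h))\colon\|\mathbf{u}_h(\tau)\|_{l^\infty}\leq M\ \forall\tau\in[0,t]\}$. The Gronwall bound applies on $[0,T^\ast)$, and choosing $h_0$ so small that $Ch_0 e^{CT}<M/2$ forces $\|\mathbf{u}_h(t)\|_{l^\infty}\leq\|\mathbf{u}(t)\|_{l^\infty}+\|\mathbf{e}_h(t)\|_{l^\infty}<M$ strictly; by continuity this prevents $T^\ast$ from lying in the interior of $[0,\min(T,T_h))$, while the blow-up criterion (\ref{eq:blow2a}) rules out $T_h<T$. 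Hence $T_h\geq T$, $T^\ast=T$, and the Gronwall estimate produces (\ref{eq:fourone}).
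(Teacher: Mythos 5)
Your proposal is correct and follows essentially the same route as the paper's proof: the same transfer of the derivative onto $f(u)$ (discretely via $D\bm{\beta}_h\ast\mathbf{g}=\bm{\beta}_h\ast D\mathbf{g}$ and continuously via $(\beta\ast f(u))_x=\beta\ast (f(u))_x$), the same splitting of the consistency error into a quadrature term handled by Lemma \ref{lem:lem3.1}(a) and a central-difference term handled by Lemma \ref{lem:lem3.2}(a), the same stability bound from Lemma \ref{lem:lem3.3}, and the same Gronwall-plus-continuation argument to secure $T_h\geq T$. The only differences are cosmetic (your explicit bound on $\Vert\bm{\beta}_h\Vert_{l_h^1}$ and the phrasing of the bootstrap), so nothing further is needed.
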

\begin{proof}
    We first let $\displaystyle M=\max_{0\leq t\leq T}\Vert u(t)\Vert _{L^{\infty}}$. Since $\Vert \bm{\varphi }_{h}\Vert _{l^{\infty }}\leq \Vert \varphi \Vert_{L^{\infty }}\leq M$, by continuity there is some maximal time $t_{h}\leq T$ such that $\Vert \mathbf{u}_{h}(t)\Vert _{l^{\infty}}\leq 2M$ for all $t\in \lbrack 0,t_{h}]$. Moreover, by the maximality condition either $t_{h}=T$ or $\Vert \mathbf{u}_{h}(t_{h})\Vert _{l^{\infty}}=2M$. \ At the point $\ x=x_{i}$,  (\ref{eq:cont1}) becomes
    \begin{displaymath}
        u_{t}(x_{i}, t) +\big(\beta \ast f(u)\big)_{x}(x_{i}, t)=0 .\text{ }
    \end{displaymath}%
     Recalling that $\mathbf{u}(t)=\mathbf{R}u(t)$, this becomes $\mathbf{u}^{\prime}(t)=-\mathbf{R}\big(\beta \ast f(u)\big)_{x}(t)$.  A residual term $\mathbf{F}_{h}$ arises from the discretization  of (\ref{eq:cont1}):
     \begin{equation}
       {d \mathbf{u}\over dt}= -D\big(\bm{\beta}_{h}\ast f(\mathbf{u})\big)+\mathbf{F}_{h}, \label{eq:u-F}
    \end{equation}%
     where
     \begin{displaymath}
        \mathbf{F}_{h}=D\big(\bm{\beta}_{h}\ast f(\mathbf{u})\big)-\mathbf{R}\big(\beta \ast f(u)\big)_{x}.
     \end{displaymath}
     The $i$th entry of $\mathbf{F}_{h}$ satisfies
    \begin{align}
        (F_{h})_{i}
        &=\Big(\big( \bm{\beta}_{h}\ast \mathbf{R}f(u)_{x}\big)_{i}-\big(\beta \ast f(u)_{x}\big)(x_{i})\Big)
         +\Big(\big(\bm{\beta}_{h}\ast Df(\mathbf{u})\big)_{i}-\big(\bm{\beta}_{h}\ast \mathbf{R}f(u)_{x}\big)_{i}\Big) \nonumber \\
        &=(F_{h}^{1})_{i}+(F_{h}^{2})_{i}, \label{eq:decomp}
    \end{align}%
    where the variable $t$ is suppressed for brevity.     We start with the  term $(F_{h}^{1})_{i}$. Replacing $f(u)$ by $g$ for convenience, we have
    \begin{equation}
        (F_{h}^{1})_{i} =(\bm{\beta}_{h}\ast \mathbf{g}^{\prime})_{i}-(\beta \ast g^{\prime})(x_{i})
                        =\sum_{j}h\beta (x_{i}-x_{j})g^{\prime}(x_{j})  -\int_{\mathbb{R}} \beta (x_{i}-y)g^{\prime}(y)dy. \label{eq:Fh1}
    \end{equation}%
    Since $\beta \in L^{1}(\mathbb{R})$ and $\beta^{\prime}=\mu$ be a finite measure on $\mathbb{R}$, by (\ref{eq:intA}) of Lemma \ref{lem:lem3.1} we have
    \begin{displaymath}
        \left \vert (F_{h}^{1})_{i}\right \vert \leq h\vert \widetilde{\mu}\vert(\mathbb{R})
    \end{displaymath}%
    where $r^{\prime}=\widetilde{\mu}$ with $r(y)=\beta (x_{i}-y)g^{\prime}(y)$. When $\beta^{\prime}\in L^{1}$,
     we have
     \begin{displaymath}
        \Vert r^{\prime} \Vert_{L^{1}}
            \leq \Vert \beta^{\prime} \Vert_{L^{1}}\Vert g^{\prime}\Vert_{L^{\infty}}
                +\Vert \beta\Vert_{L^{1}}\Vert g^{\prime\prime}\Vert_{L^{\infty}}.
    \end{displaymath}%
    Since $\beta^{\prime}=\mu$ and $g^{\prime}$, $g^{\prime\prime}$ are bounded for $s>5/2$, we have
    \begin{align}
    \vert \widetilde{\mu}\vert(\mathbb{R})
    &\leq \vert \mu \vert(\mathbb{R})\Vert g^{\prime}\Vert_{L^{\infty}}+\Vert \beta\Vert_{L^{1}}\Vert g^{\prime\prime}\Vert_{L^{\infty}}, \\
    & \leq  C \big( \vert \mu \vert(\mathbb{R}) +\Vert \beta\Vert_{L^{1}}\Big)\Vert u\Vert_{H^{s}}
    \end{align}
    where we have used Lemma \ref{lem:lem2.2}.
    Thus
    \begin{displaymath}
        |(F_{h}^{1})_{i}|\leq C h\big(\vert \mu \vert (\mathbb{R})+\Vert \beta\Vert_{L^{1}}\big)\Vert u\Vert_{H^{s}} \leq Ch \Vert u\Vert_{H^{s}}.
    \end{displaymath}%
    For the second term $(F_{h}^{2})_{i}$, again with $g=f(u)$ and $s>5/2$ we have
    \begin{align}
        \left \vert (F_{h}^{2})_{i}\right \vert
        &=\big \vert \left( \bm{\beta}_{h}\ast (D\mathbf{g}-\mathbf{R}g^{\prime})\right)_{i}\big\vert
            \leq C\Vert D\mathbf{g} -\mathbf{g}^{\prime }\Vert_{l^{\infty }}, \nonumber \\
         &  \leq Ch\Vert g^{\prime\prime}\Vert_{L^{\infty}}\leq Ch \Vert g\Vert_{H^{s}}\leq C h\Vert u \Vert_{H^{s}},  \nonumber
    \end{align}%
    where Lemma \ref{lem:lem2.2} and (\ref{eq:lemma32a}) of Lemma \ref{lem:lem3.2} are used.    Combining the estimates for $|(F_{h}^{1})_{i}|$ and $|(F_{h}^{2})_{i}|$, we obtain
    \begin{displaymath}
        \Vert \mathbf{F}_{h}(t)\Vert _{l^{\infty }}\leq Ch \Vert u(t)\Vert_{H^{s}}.
    \end{displaymath}
     We now let $\mathbf{e}(t)=\mathbf{u}(t)-\mathbf{u}_{h}(t)$  be the error term. Then, from (\ref{eq:disc}) and (\ref{eq:u-F}) we have
    \begin{displaymath}
        {d\mathbf{e}(t)\over dt}
        =-D\bm{\beta}_{h}\ast \big(f(\mathbf{u})-f(\mathbf{u}_{h})\big)+\mathbf{F}_{h}, ~~~~   \mathbf{e}(0) =\mathbf{0}.
    \end{displaymath}%
    This implies
    \begin{equation}
    \mathbf{e}(t)= \int_{0}^{t} \Big(-D\bm{\beta}_{h}\ast
            \big(f(\mathbf{u})-f(\mathbf{u}_{h})\big)+\mathbf{F}_{h}\Big)d\tau .    \label{eq:eqe}
    \end{equation}
    By noting that
    \begin{align}
        \Vert -D\bm{\beta}_{h}\ast \left (f(\mathbf{u})-f(\mathbf{u}_{h})\right ) \Vert_{l^{\infty}}
            &\leq  \Vert D\bm{\beta}_{h}\Vert_{l_h^{1}}\Vert f(\mathbf{u})-f(\mathbf{u}_{h})\Vert_{l^{\infty}} \nonumber\\
            & \leq  C \Vert \mathbf{u}-\mathbf{u}_{h}\Vert_{l^{\infty }}  \leq C \Vert \mathbf{e}(t)\Vert_{l^{\infty }},
            \nonumber
    \end{align}
    it follows from (\ref{eq:eqe}) that, for $t\leq t_{h}\leq T$,
    \begin{align}
        \Vert \mathbf{e}(t)\Vert _{l^{\infty }}
           & \leq \sup_{0\leq t\leq T}\Vert \mathbf{F}_{h}(t)\Vert _{l^{\infty }} \int_{0}^{t} d\tau +C\int_{0}^{t}\Vert \mathbf{e}(\tau )\Vert_{l^{\infty }}d\tau, \nonumber  \\
           & \leq C \Big (h t  + \int_{0}^{t}\Vert \mathbf{e}(\tau )\Vert_{l^{\infty }}d\tau \Big).
           \label{eq:errfirst}
    \end{align}%
    Then, by Gronwall's inequality,
    \begin{displaymath}
        \Vert \mathbf{e}(t)\Vert _{l^{\infty }}            \leq C h T             e^{C T}.
    \end{displaymath}
    We observe that the constant $C$ depends  on the bounds   $M=\sup_{0\leq t \leq T}\Vert u(t)\Vert_{L^{\infty}}$, $\Vert \beta \Vert_{L^{1}}$ and $\vert \mu \vert (\mathbb{R})$.  We note that, by Lemma \ref{lem:lem2.3},  $\Vert u(t)\Vert_{H^{s}}$ depends on $M$ and $\beta$. The above inequality, in particular, implies that  $\Vert \mathbf{e}(t_{h})\Vert _{l^{\infty }}<M$ for sufficiently small $h$. Then we have $\Vert \mathbf{u}_{h}(t_{h})\Vert _{l^{\infty }}<2M$ showing that $t_{h}=T_{h}=T$. From the above estimate we get (\ref{eq:fourone}).
\end{proof}
\begin{theorem}\label{theo:theo4.2}
    Let $s>\frac{7}{2}$, $ f\in C^{[ s ] +1}(\mathbb{R})$ with $f(0)=0$. Let $\beta \in W^{1,1}(\mathbb{R})$ and $\beta^{\prime\prime}=\nu$ be a finite measure on $\mathbb{R}$.  Let $u\in C^{1}\left([0,T], H^{s}(\mathbb{R})\right)$ be the solution of the initial-value problem (\ref{eq:cont1})-(\ref{eq:initial}) with $~\varphi \in H^{s}(\mathbb{R})$. Similarly, let  $\mathbf{u}_{h}\in C^{1}\left([0,T_{h}),l^{\infty}\right)$ be the solution of  (\ref{eq:disc}) with initial data $\bm{\varphi}_{h}$.   Let $\mathbf{u}(t)=\mathbf{R}u(t) =(u(x_{i},t))$. Then there is some $h_{0}$ so that for $h\leq h_{0}$, the maximal existence time $T_{h}$ of $\mathbf{u}_{h}$ is at least $T$ and
    \begin{equation}
        \Vert \mathbf{u}(t)-\mathbf{u}_{h}(t)\Vert_{l^{\infty}}=\mathcal{O}(h^{2})    \label{eq:fourtwo}
    \end{equation}
    for all $t\in \lbrack 0,T \rbrack$.
\end{theorem}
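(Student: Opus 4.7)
The plan is to follow the argument of Theorem \ref{theo:theo4.1} almost verbatim, but to upgrade the two sources of error---the quadrature appearing in $(F_{h}^{1})_{i}$ and the central-difference approximation appearing in $(F_{h}^{2})_{i}$---from first order to second order by invoking parts (b) of Lemmas \ref{lem:lem3.1} and \ref{lem:lem3.2} in place of parts (a). As before I would set $M=\max_{0\le t\le T}\|u(t)\|_{L^{\infty}}$, use continuity to choose a maximal $t_{h}\le T$ with $\|\mathbf{u}_{h}(t)\|_{l^{\infty}}\le 2M$ on $[0,t_{h}]$, and decompose the residual $\mathbf{F}_{h}$ exactly as in (\ref{eq:decomp}).

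The main computation is the bound on $(F_{h}^{1})_{i}$, which is the quadrature error for the integrand $r(y)=\beta(x_{i}-y)g'(y)$ with $g=f(u)$. Under the new hypothesis $\beta\in W^{1,1}(\mathbb{R})$ with $\beta''=\nu$ a finite measure, $r\in W^{1,1}(\mathbb{R})$, and a product-rule computation gives
\[
r''(y)=\beta''(x_{i}-y)g'(y)-2\beta'(x_{i}-y)g''(y)+\beta(x_{i}-y)g'''(y),
\]
so $r''$ is a finite measure $\widetilde{\nu}$ whose total variation is controlled by
\[
|\widetilde{\nu}|(\mathbb{R})\le |\nu|(\mathbb{R})\|g'\|_{L^{\infty}}+2\|\beta'\|_{L^{1}}\|g''\|_{L^{\infty}}+\|\beta\|_{L^{1}}\|g'''\|_{L^{\infty}}.
\]
The assumption $s>7/2$ is precisely what provides $g',g'',g'''\in L^{\infty}$ through Sobolev embedding, and Lemma \ref{lem:lem2.2} converts these three $L^{\infty}$-norms into $C\|u\|_{H^{s}}$. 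Lemma \ref{lem:lem3.1}(b) then yields $|(F_{h}^{1})_{i}|\le h^{2}|\widetilde{\nu}|(\mathbb{R})\le Ch^{2}\|u\|_{H^{s}}$. For $(F_{h}^{2})_{i}=\big(\bm{\beta}_{h}\ast(D\mathbf{g}-\mathbf{R}g')\big)_{i}$, the discrete Young inequality and (\ref{eq:lemma32b}) give $|(F_{h}^{2})_{i}|\le \|\bm{\beta}_{h}\|_{l_{h}^{1}}\cdot (h^{2}/6)\|g'''\|_{L^{\infty}}\le Ch^{2}\|u\|_{H^{s}}$; the $l_{h}^{1}$-norm of $\bm{\beta}_{h}$ is finite because $\beta\in W^{1,1}$, by the same Riemann-sum reasoning that underlies Lemma \ref{lem:lem3.3}. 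Combining both pieces and using Lemma \ref{lem:lem2.3} to uniformly bound $\|u(t)\|_{H^{s}}$ on $[0,T]$, I obtain $\|\mathbf{F}_{h}(t)\|_{l^{\infty}}\le Ch^{2}$.

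The remainder of the proof is a carbon copy of the closure step in Theorem \ref{theo:theo4.1}: setting $\mathbf{e}(t)=\mathbf{u}(t)-\mathbf{u}_{h}(t)$, the integrated error equation and Young's inequality applied to $D\bm{\beta}_{h}\ast(f(\mathbf{u})-f(\mathbf{u}_{h}))$ give $\|\mathbf{e}(t)\|_{l^{\infty}}\le C\bigl(h^{2}t+\int_{0}^{t}\|\mathbf{e}(\tau)\|_{l^{\infty}}\,d\tau\bigr)$, and Gronwall's inequality delivers $\|\mathbf{e}(t)\|_{l^{\infty}}\le Ch^{2}Te^{CT}$. Taking $h\le h_{0}$ small enough forces $\|\mathbf{e}(t_{h})\|_{l^{\infty}}<M$, hence $\|\mathbf{u}_{h}(t_{h})\|_{l^{\infty}}<2M$, which by the maximality of $t_{h}$ forces $t_{h}=T_{h}=T$ and yields (\ref{eq:fourtwo}). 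The only mildly delicate step I anticipate is the product-rule bookkeeping for $r''$: one must keep careful track of which summand is a genuine finite measure (the term containing $\beta''$) and which two are genuine $L^{1}$ functions (the terms containing $\beta'$ and $\beta$) so that $r''$ assembles into a single finite Borel measure whose total variation admits the claimed bound.
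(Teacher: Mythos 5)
Your proposal is correct and follows essentially the same route as the paper's proof: the same decomposition of the residual, the identical product-rule expression for $r''$ bounded via Lemma \ref{lem:lem3.1}(b) and Lemma \ref{lem:lem2.2}, the use of (\ref{eq:lemma32b}) for $(F_{h}^{2})_{i}$, and the same Gronwall closure inherited from Theorem \ref{theo:theo4.1}. No substantive differences or gaps.
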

\begin{proof}
    The proof follows  that of Theorem \ref{theo:theo4.1} very closely. So here we use the same notation and provide only a brief outline.  The main observation needed is that the only place where the proofs differ is the estimate for $(F_{h}^{1})_{i}$ in (\ref{eq:Fh1}).   Since $\beta \in W^{1,1}(\mathbb{R})$ and $\beta^{\prime\prime}=\nu$ be a finite measure on $\mathbb{R}$, by the second part of Lemma \ref{lem:lem3.1} we have
    \begin{displaymath}
        \left \vert (F_{h}^{1})_{i}\right \vert \leq h^{2}\vert \widetilde{\nu}\vert(\mathbb{R})
    \end{displaymath}%
    where $r^{\prime\prime}=\widetilde{\nu}$ with $r(y)=\beta (x_{i}-y)g^{\prime}(y)$. Formally we can write
    \begin{displaymath}
        r^{\prime\prime }(y)=\beta^{\prime\prime }(x_{i}-y)g^{\prime}(y)-2\beta^{\prime} (x_{i}-y)g^{\prime\prime}(y)
                                +\beta (x_{i}-y)g^{\prime\prime\prime}(y).
    \end{displaymath}%
    Noting that $\beta^{\prime\prime}=\nu$ and $g^{\prime}$, $g^{\prime\prime}$, $g^{\prime\prime\prime}$  are bounded for $s>7/2$ and using Lemma \ref{lem:lem2.2}, we get
    \begin{align}
        \vert \widetilde{\nu}\vert(\mathbb{R})
        &\leq C (\vert \nu \vert(\mathbb{R})\Vert g^{\prime}\Vert_{L^{\infty}}
            +2\Vert \beta^{\prime}\Vert_{L^{1}}\Vert g^{\prime\prime}\Vert_{L^{\infty}}
            +\Vert \beta\Vert_{L^{1}}\Vert g^{\prime\prime\prime}\Vert_{L^{\infty}} \nonumber \\
        & \leq  C\big( \vert \nu\vert (\mathbb{R}) +2\Vert \beta\Vert_{W^{1,1}}\big)\Vert u\Vert_{H^{s}}
    \end{align}
    so that
    \begin{displaymath}
        \vert (F_{h}^{1})_{i}\vert
        \leq C h^{2}\big(\vert \nu \vert (\mathbb{R})+2\Vert \beta\Vert_{W^{1,1}}\big)\Vert u\Vert_{H^{s}}
        \leq Ch^{2}\Vert u\Vert_{H^{s}}.
    \end{displaymath}%
    For the second term $(F_{h}^{2})_{i}$, again with $g=f(u)$ and $s>7/2$ we have
    \begin{align}
        \left \vert (F_{h}^{2})_{i}\right \vert
        &=\big \vert \left( \bm{\beta}_{h}\ast (D\mathbf{g}-\mathbf{R}g^{\prime})\right)_{i}\big\vert
            \leq C\Vert D\mathbf{g} -\mathbf{g}^{\prime }\Vert_{l^{\infty }} \nonumber \\
        &  \leq C h^2\Vert g^{\prime\prime\prime}\Vert_{L^{\infty}}\leq Ch^2 \Vert g\Vert_{H^{s}}\leq C h^2\Vert u \Vert_{H^{s}},  \nonumber
    \end{align}%
    where (\ref{eq:lemma32b}) of Lemma \ref{lem:lem3.2} is used.    Using the estimates for $\vert (F_{h}^{1})_{i}\vert $ and $\vert (F_{h}^{2})_{i}\vert $ in (\ref{eq:decomp}), we obtain
    \begin{displaymath}
        \Vert \mathbf{F}_{h}(t)\Vert_{l^{\infty }}
            \leq Ch^{2} \Vert u\Vert_{H^{s}}.
    \end{displaymath}
     Now (\ref{eq:errfirst}) takes the following form
    \begin{equation}
        \Vert \mathbf{e}(t)\Vert _{l^{\infty }}
            \leq C \Big (h^{2} t  + \int_{0}^{t}\Vert \mathbf{e}(\tau )\Vert_{l^{\infty }}d\tau \Big)
    \end{equation}%
     for $t\leq t_{h}\leq T$.
    Then, by Gronwall's inequality,
    \begin{displaymath}
        \Vert \mathbf{e}(t)\Vert _{l^{\infty }}
            \leq C h^{2} T e^{C T}.
    \end{displaymath}
    Now the constant $C$ depends on  the bounds   $M=\sup_{0\leq t \leq T}\Vert u(t)\Vert_{L^{\infty}}$, $\Vert \beta \Vert_{W^{1,1}}$ and $\vert \nu \vert(\mathbb{R})$.  The rest of the proof follows the same lines as the proof of Theorem \ref{theo:theo4.1}.
\end{proof}

\setcounter{equation}{0}
\section{The Truncated Problem and a Decay Estimate}\label{sec:sec5}

\subsection{The Truncated Problem}
In practical computations, one needs to truncate both the infinite series in (\ref{eq:disc-con}) at a finite $N$  and  the infinite  system of equations in (\ref{eq:disc}) to the system of $2N+1$ equations. After truncating, we obtain from (\ref{eq:disc}) the  finite-dimensional system
\begin{equation}
    \frac{dv_{i}^{N}}{dt}
     =-\sum_{j=-N}^{N}hD\beta (x_{i}-x_{j})f(v_{j}^{N}),\text{\ \ \ \ \ \ }-N\leq i\leq N  \label{eq:trunca}
\end{equation}%
where $v_{i}^{N}$ are the components of a vector valued function $\mathbf{v}^{N}(t)$ with finite dimension $2N+1$. In this section we estimate the truncation error resulting from considering (\ref{eq:trunca}) instead of (\ref{eq:disc}) and give a  decay estimate  for solutions to the initial-value problem (\ref{eq:cont1})-(\ref{eq:initial}) with certain kernel functions.

We start by rewriting (\ref{eq:trunca}) in vector form
\begin{displaymath}
    \frac{d\mathbf{v}^{N}}{dt}=-B^{N}f(\mathbf{v}^{N}),
\end{displaymath}
where $B^{N}$ denotes a matrix with $(2N+1)$ rows and $(2N+1)$ columns, whose typical element is  $b^{N}_{ij}=hD\beta (x_{i}-x_{j})$.  Using  the notation  $\displaystyle \Vert \mathbf{w}\Vert_{l^{\infty}}=\max_{-N \leq i \leq N} \left \vert w_{i} \right \vert$ and assuming that $\beta^{\prime}=\mu$ is a finite measure on $\mathbb{R}$ we get from Lemma \ref{lem:lem3.3}
\begin{displaymath}
    \Vert B^{N}\mathbf{w}\Vert_{l^{\infty}}\leq  |\mu| (\mathbb{R})\Vert \mathbf{w}\Vert_{l^{\infty}}.
\end{displaymath}
As long as $f$ is a bounded and  smooth function, there exists a unique solution of the initial-value problem defined for (\ref{eq:trunca}) over an interval $[0, T^{N})$. Moreover  the blow-up condition
\begin{equation}
        \limsup_{t\rightarrow (T^{N})^{-}} \Vert \mathbf{v}^{N}(t)\Vert_{l^{\infty} }=\infty   \label{eq:blow3}
\end{equation}
is compatible with (\ref{eq:blow2a}) in the discrete problem. Consider the projection  ${\cal T}^{N}\mathbf{v}$ of the solution $\mathbf{v}$ of the semi-discrete initial-value problem associated with (\ref{eq:disc}) onto $\mathbb{R}^{2N+1}$, defined by
\begin{displaymath}
    {\cal T}^{N}\mathbf{v}=(v_{-N},v_{-N+1}, \ldots ,v_{0}, \ldots, v_{N-1},v_{N})
\end{displaymath}
with the truncation operator ${\cal T}^{N}: l^{\infty} \rightarrow \mathbb{R}^{2N+1}$. Our goal is to estimate the truncation error  ${\cal T}^{N}\mathbf{v}-\mathbf{v}^{N}$ and show that, for sufficiently large $N$, $\mathbf{v}^{N}$ approximates the solution $u$ of the continuous problem (\ref{eq:cont1})-(\ref{eq:initial}).
\begin{theorem}\label{theo:theo5.1}
    Let $\mathbf{v}\in C^{1}\left([0,T],l^{\infty}\right)$  be the solution of (\ref{eq:disc}) with initial value $\mathbf{v}(0)$ and let
    \begin{displaymath}
        \delta=\sup \big\{ \left\vert v_{i}(t)\right\vert : t\in \left[ 0,T\right] , \left\vert i\right\vert >N\big\} ~~\mbox{and}~~
         \epsilon (\delta )=\max_{\left\vert z\right\vert \leq \delta } \left\vert f(z)\right\vert .
    \end{displaymath}
    Then for sufficiently small $\epsilon(\delta)$, the solution $\mathbf{v}^{N}$ \ of (\ref{eq:trunca}) with initial  value $\mathbf{v}^{N}(0)={\cal T}^{N}\mathbf{v}(0)$ exists for  times $t\in \lbrack 0,T]$ and
    \begin{displaymath}
        \left\vert v_{i}^{N}(t)-v_{i}(t)\right\vert
            \leq C\epsilon (\delta ),\text{ \ }t\in \lbrack 0,T\rbrack,
    \end{displaymath}%
    for all $-N\leq i \leq N$.
\end{theorem}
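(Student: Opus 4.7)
The plan is to imitate the bootstrap strategy used in the proof of Theorem \ref{theo:theo4.1}. First I would write the $i$th component ($|i|\leq N$) of the semi-discrete equation for $\mathbf{v}$ as
\[
    \frac{dv_{i}}{dt}=-\sum_{j=-N}^{N}h(D\beta)(x_{i}-x_{j})f(v_{j})-\sum_{|j|>N}h(D\beta)(x_{i}-x_{j})f(v_{j}),
\]
so that subtracting (\ref{eq:trunca}) isolates two sources of error: a Lipschitz-type term in $\mathbf{v}-\mathbf{v}^{N}$ coming from the near-field indices $|j|\leq N$, and a tail term $R_{i}=\sum_{|j|>N}h(D\beta)(x_{i}-x_{j})f(v_{j})$ coming from the discarded far-field contributions. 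The definitions of $\delta$ and $\epsilon(\delta)$ are designed exactly so that $|f(v_{j}(t))|\leq\epsilon(\delta)$ whenever $|j|>N$, and then Lemma \ref{lem:lem3.3} gives the uniform bound $|R_{i}(t)|\leq\|D\bm{\beta}_{h}\|_{l_{h}^{1}}\epsilon(\delta)\leq|\mu|(\mathbb{R})\epsilon(\delta)$.

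Next I would set up the continuity/bootstrap argument. Let $M=\sup_{0\leq t\leq T}\|\mathbf{v}(t)\|_{l^{\infty}}$. Since $\mathbf{v}^{N}(0)={\cal T}^{N}\mathbf{v}(0)$ has $l^{\infty}$-norm at most $M$, there is a maximal time $t^{N}\leq\min(T,T^{N})$ on which $\|\mathbf{v}^{N}(t)\|_{l^{\infty}}\leq 2M$. On this interval $f$ is Lipschitz with a constant $L=L(M)$ on $[-2M,2M]$, so the near-field sum is controlled by
\[
    \left|\sum_{j=-N}^{N}h(D\beta)(x_{i}-x_{j})\bigl[f(v_{j})-f(v_{j}^{N})\bigr]\right|\leq L\,|\mu|(\mathbb{R})\,\|{\cal T}^{N}\mathbf{v}(t)-\mathbf{v}^{N}(t)\|_{l^{\infty}}.
\]
Writing $\mathbf{e}^{N}(t)={\cal T}^{N}\mathbf{v}(t)-\mathbf{v}^{N}(t)$, integrating in time using $\mathbf{e}^{N}(0)=\mathbf{0}$, and applying Gronwall's inequality then yields
\[
    \|\mathbf{e}^{N}(t)\|_{l^{\infty}}\leq C\epsilon(\delta)\,T\,e^{CT},\qquad t\in[0,t^{N}],
\]
with $C$ depending only on $M$ and $|\mu|(\mathbb{R})$.

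Finally I would close the bootstrap. If $\epsilon(\delta)$ is sufficiently small (depending on $M$, $T$ and $|\mu|(\mathbb{R})$), the above estimate forces $\|\mathbf{e}^{N}(t)\|_{l^{\infty}}<M$, and hence $\|\mathbf{v}^{N}(t)\|_{l^{\infty}}<2M$ at $t=t^{N}$. Combined with the blow-up criterion (\ref{eq:blow3}), this contradicts the maximality of $t^{N}$ unless $t^{N}=T$, so the truncated solution exists on all of $[0,T]$ and satisfies the claimed bound. The main obstacle is precisely this bootstrap step: one must know a priori that $\mathbf{v}^{N}$ does not blow up before time $T$ in order to use the local Lipschitz property of $f$, and the smallness assumption on $\epsilon(\delta)$ is exactly what allows the continuity argument to close. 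Everything else reduces to Young's inequality on $l^{\infty}\ast l_{h}^{1}$, the estimate $\|D\bm{\beta}_{h}\|_{l_{h}^{1}}\leq|\mu|(\mathbb{R})$ from Lemma \ref{lem:lem3.3}, and a single application of Gronwall's lemma.
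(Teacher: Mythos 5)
Your proposal is correct and follows essentially the same route as the paper's proof: the same near-field/far-field splitting with residual bounded by $|\mu|(\mathbb{R})\,\epsilon(\delta)$ via Lemma \ref{lem:lem3.3}, the same $M$/$2M$ continuity bootstrap, and the same Gronwall estimate on $\widetilde{\mathbf{e}}={\cal T}^{N}\mathbf{v}-\mathbf{v}^{N}$ closed by the smallness of $\epsilon(\delta)$. No gaps to report.
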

\begin{proof}
    We follow the approach in the proof of Theorem \ref{theo:theo4.1}. Taking the components with $-N \leq i \leq N$ of (\ref{eq:disc}) we have
    \begin{align}
    \frac{dv_{i}}{dt}
     &=-\sum_{j=-\infty}^{\infty}hD\beta (x_{i}-x_{j})f(v_{j})  \nonumber \\
     &=-\sum_{j=-N}^{N}hD\beta (x_{i}-x_{j})f(v_{j})+F_{i}^{N} \nonumber
    \end{align}%
    with the residual term
    \begin{displaymath}
    F_{i}^{N}=-\sum_{|j|> N}hD\beta (x_{i}-x_{j})f(v_{j}).
    \end{displaymath}%
    Then  ${\cal T}^{N}\mathbf{v}$ satisfies the system
    \begin{displaymath}
        \frac{d{\cal T}^{N}\mathbf{v}}{dt}=-B^{N}f({\cal T}^{N}\mathbf{v})+\mathbf{F}^{N}
    \end{displaymath}
    with the residual term $\mathbf{F}^{N}=(F_{i}^{N})$. Estimating the residual term we get
     \begin{displaymath}
        \left \vert F_{i}^{N}\right \vert \leq |\mu|(\mathbb{R}) \sup_{|j|> N} \left \vert f(v_{j})\right \vert \leq C \epsilon(\delta).
    \end{displaymath}%
    We set  $\displaystyle M=\max_{0\leq t\leq T}\Vert \mathbf{v}(t)\Vert _{l^{\infty} }$. Since $\Vert\mathbf{v}(0)\Vert _{l^{\infty }}\leq M$, by continuity of the solution $\mathbf{v}^{N}$ of the truncated problem there is some maximal time $t_{N}\leq T$ such that we have   $\Vert \mathbf{v}^{N}(t)\Vert_{l^{\infty}}\leq 2M$ for all $t\in \lbrack 0,t_{N}]$. By the maximality condition either $t_{N}=T$ or $\Vert \mathbf{v}^{N}(t_{N})\Vert_{l^{\infty}}=2M$. We define the error term $\widetilde{\mathbf{e}}={\cal T}^{N}\mathbf{v}-\mathbf{v}^{N}$. Then
    \begin{displaymath}
           \frac{d\widetilde{\mathbf{e}}(t)}{dt}=-B^{N}\big(f({\cal T}^{N}\mathbf{v})-f(\mathbf{v}^{N})\big)+\mathbf{F}^{N}, ~~~~
         \widetilde{\mathbf{e}}(0) =\mathbf{0},
    \end{displaymath}%
    so
    \begin{displaymath}
        \widetilde{\mathbf{e}}(t)=\int_{0}^{t} \Big(-B^{N}\big(f({\cal T}^{N}\mathbf{v})-f(\mathbf{v}^{N})\big)+\mathbf{F}^{N}\Big)d\tau .
    \end{displaymath}%
    Then
    \begin{displaymath}
        \Vert \widetilde{\mathbf{e}}(t)\Vert_{l^{\infty}}\leq |\mu| (\mathbb{R})\int_{0}^{t}\left\Vert \big(f({\cal T}^{N}\mathbf{v})-f(\mathbf{v}^{N})\big)(\tau) \right\Vert_{l^{\infty}}d\tau
        + \int_{0}^{t} \Vert \mathbf{F}^{N}(\tau)\Vert_{l^\infty} d\tau.
    \end{displaymath}%
    But
    \begin{displaymath}
        \left\Vert f({\cal T}^{N}\mathbf{v})-f(\mathbf{v}^{N})\right\Vert_{l^{\infty}}
            \leq C \left\Vert {\cal T}^{N}\mathbf{v}-\mathbf{v}^{N}\right\Vert_{l^{\infty}}.
    \end{displaymath}%
   Putting together, we have
    \begin{displaymath}
        \Vert \widetilde{\mathbf{e}}(t)\Vert _{l^{\infty }}\leq CT\epsilon(\delta) +C \int_{0}^{t}\Vert \widetilde{\mathbf{e}}
        (\tau )\Vert _{l^{\infty }}d\tau,
    \end{displaymath}%
    and by Gronwall's inequality,
    \begin{displaymath}
        \Vert \widetilde{\mathbf{e}}(t)\Vert _{l^{\infty }}\leq C\epsilon(\delta) T e^{CT}.
    \end{displaymath}%
    This, in particular, implies that there is some $\epsilon_{0}$ such that for all $\epsilon(\delta)\leq \epsilon_{0} $ we have $\Vert \widetilde{\mathbf{e}}(t_{N})\Vert _{l^{\infty }}<M$. Then we have     $\Vert \mathbf{v}^{N} (t_{N}) \Vert_{l^{\infty }} < 2M$ showing that $t_{N}=T$  and this completes the proof.
\end{proof}

By combining Theorem \ref{theo:theo5.1} with Theorems \ref{theo:theo4.1} and \ref{theo:theo4.2}, respectively, we now state our main results through the following two theorems.
\begin{theorem}\label{theo:theo5.2}
    Let $s>\frac{5}{2}$, $f\in C^{[ s] +1}(\mathbb{R})$ with $f(0)=0$.   Let $\beta \in L^{1}(\mathbb{R})$ and $\beta^{\prime}=\mu$ be a finite measure on $\mathbb{R}$.  Let $u\in C^{1}\left([0,T],H^{s}(\mathbb{R})\right)$  be the solution  of the initial-value problem (\ref{eq:cont1})-(\ref{eq:initial}) with $\varphi \in H^{s}(\mathbb{R})$.  Then for sufficiently small $h$ and  $\epsilon >0$, there is an $N$ so that  the solution $\mathbf{u}_{h}^{N}$ \ of (\ref{eq:trunca}) with initial  values $\mathbf{u}_{h}^{N}(0)={\cal T}^{N}\bm{\varphi}_{h}$, exists for  times $t\in \lbrack 0,T]$ and
    \begin{equation}
        \Big\vert u(ih,t) -\left(\mathbf{u}_{h}^{N}\right)_{i}(t)\Big\vert
         = {\cal O}\left(h+\epsilon\right),
         \text{ \ }  t\in \lbrack 0,T] \label{eq:fivetwo}
    \end{equation}%
    for all $-N\leq i\leq N$.
\end{theorem}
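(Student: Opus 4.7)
The proof is a combination of Theorem \ref{theo:theo4.1} (discretization error $\mathcal{O}(h)$) and Theorem \ref{theo:theo5.1} (truncation error $\mathcal{O}(\epsilon(\delta))$) via the triangle inequality, with the key intermediate step being to show that the untruncated discrete solution $\mathbf{u}_h$ has small tail values outside the window $|i|\leq N$ so that Theorem \ref{theo:theo5.1} can be applied.

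The plan is as follows. First, introduce the infinite discrete solution $\mathbf{u}_h\in C^{1}([0,T_h),l^{\infty})$ of (\ref{eq:disc}) with initial data $\bm{\varphi}_h=\mathbf{R}\varphi$ from Theorem \ref{theo:theo3.4}. Apply Theorem \ref{theo:theo4.1} to fix $h_0$ so that for $h\leq h_0$ the maximal existence time $T_h\geq T$ and $\|\mathbf{R}u(t)-\mathbf{u}_h(t)\|_{l^\infty}\leq Ch$ uniformly on $[0,T]$. Next, use the Sobolev embedding $H^{s}(\mathbb{R})\hookrightarrow C_0(\mathbb{R})$ (valid since $s>5/2>1/2$) together with compactness of $[0,T]$ and continuity of $t\mapsto u(\cdot,t)$ in $H^{s}$ to conclude the uniform decay
\begin{equation*}
\sup_{t\in[0,T]}\sup_{|x|\geq R}|u(x,t)|\longrightarrow 0\quad\text{as }R\to\infty.
\end{equation*}
Combining this with the $\mathcal{O}(h)$ discretization bound gives, for $|i|>N$,
\begin{equation*}
|(u_h)_i(t)|\leq|u(x_i,t)|+Ch\leq \sup_{t,\,|x|\geq Nh}|u(x,t)|+Ch,
\end{equation*}
so that the quantity $\delta=\sup\{|(u_h)_i(t)|:t\in[0,T],\,|i|>N\}$ can be made arbitrarily small by first taking $h$ small and then $N$ large.

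Now, given the tolerance $\epsilon>0$, use continuity of $f$ with $f(0)=0$ to choose $\delta_0>0$ with $\epsilon(\delta_0)=\max_{|z|\leq\delta_0}|f(z)|\leq\epsilon$, and then (shrinking $h_0$ and enlarging $N$ if necessary) arrange $\delta\leq\delta_0$. At this point Theorem \ref{theo:theo5.1} applies to $\mathbf{v}=\mathbf{u}_h$: the truncated solution $\mathbf{u}_h^{N}$ with initial datum $\mathcal{T}^{N}\bm{\varphi}_h$ exists on $[0,T]$ and satisfies
\begin{equation*}
\|\mathcal{T}^{N}\mathbf{u}_h(t)-\mathbf{u}_h^{N}(t)\|_{l^\infty}\leq C\epsilon(\delta)\leq C\epsilon,\qquad t\in[0,T].
\end{equation*}
The desired estimate (\ref{eq:fivetwo}) then follows from the triangle inequality
\begin{equation*}
|u(ih,t)-(\mathbf{u}_h^{N})_i(t)|\leq|u(x_i,t)-(u_h)_i(t)|+|(u_h)_i(t)-(\mathbf{u}_h^{N})_i(t)|\leq Ch+C\epsilon
\end{equation*}
for $|i|\leq N$ and $t\in[0,T]$.

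The main conceptual obstacle is the tail estimate for $\mathbf{u}_h$, since Theorem \ref{theo:theo5.1} requires control of $\delta$ which is a property of the \emph{discrete} solution, not of $u$. The route above bypasses this by borrowing decay from the continuous solution via the $H^{s}\hookrightarrow C_0$ embedding and then paying an $\mathcal{O}(h)$ price to transfer it to the grid; this is essentially free because the $\mathcal{O}(h)$ already appears on the right-hand side of (\ref{eq:fivetwo}). The rest is bookkeeping in selecting $h_0$ and $N$ in the correct order (first fix $\epsilon$, then $\delta_0$, then $h$, then $N$).
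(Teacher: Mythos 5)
Your proposal is correct and follows essentially the route the paper intends: Theorem \ref{theo:theo5.2} is stated there as a direct combination of the discretization estimate of Theorem \ref{theo:theo4.1} with the truncation estimate of Theorem \ref{theo:theo5.1}, exactly as you do via the triangle inequality. Your extra step of controlling $\delta$ for the discrete solution $\mathbf{u}_h$ by transferring the uniform decay of $u$ (from $H^{s}\hookrightarrow C_0$ and compactness of $[0,T]$) at an $\mathcal{O}(h)$ cost is just the natural filling-in of the detail the paper leaves implicit (cf.\ its appeal to decay of the continuous solution in Section \ref{sec:sec5}).
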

\begin{theorem}\label{theo:theo5.3}
    Let $s>\frac{7}{2}$, $f\in C^{[ s] +1}(\mathbb{R})$ with $f(0)=0$. Let $\beta \in W^{1,1}(\mathbb{R})$ and $\beta^{\prime\prime}=\nu$ be a finite measure on $\mathbb{R}$.  Let $u\in C^{1}\left([0,T],H^{s}(\mathbb{R})\right)$  be the solution  of the initial-value problem (\ref{eq:cont1})-(\ref{eq:initial}) with $\varphi \in H^{s}(\mathbb{R})$.  Then for sufficiently small $h$ and  $\epsilon >0$, there is an $N$ so that  the solution $\mathbf{u}_{h}^{N}$ \ of (\ref{eq:trunca}) with initial  values $\mathbf{u}_{h}^{N}(0)={\cal T}^{N}\bm{\varphi}_{h}$, exists for  times $t\in \lbrack 0,T]$ and
    \begin{equation}
        \Big\vert u(ih,t) -\left(\mathbf{u}_{h}^{N}\right)_{i}(t)\Big\vert
         = {\cal O}\left(h^{2}+\epsilon\right),
         \text{ \ }  t\in \lbrack 0,T] \label{eq:fivethree}
    \end{equation}%
    for all $-N\leq i\leq N$.
\end{theorem}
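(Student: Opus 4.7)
The plan is to obtain Theorem \ref{theo:theo5.3} as a direct combination of Theorem \ref{theo:theo4.2} (quadratic discretization error) and Theorem \ref{theo:theo5.1} (truncation error) via the triangle inequality
\begin{displaymath}
|u(ih,t)-(\mathbf{u}_h^N)_i(t)| \leq |u(ih,t)-(\mathbf{u}_h)_i(t)| + |(\mathbf{u}_h)_i(t)-(\mathbf{u}_h^N)_i(t)|.
\end{displaymath}
The hypotheses of Theorem \ref{theo:theo5.3} are precisely those of Theorem \ref{theo:theo4.2}, so for $h \leq h_0$ the semi-discrete solution $\mathbf{u}_h$ exists on all of $[0,T]$ and the first term on the right is uniformly $\mathcal{O}(h^2)$ for every index $i$ and every $t\in[0,T]$.

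For the truncation piece I would apply Theorem \ref{theo:theo5.1} with $\mathbf{v} = \mathbf{u}_h$, which provides the bound $C\epsilon(\delta)$ where $\delta = \sup\{|(u_h)_j(t)|:|j|>N,\ t\in[0,T]\}$ and $\epsilon(\delta) = \max_{|z|\leq\delta}|f(z)|$, subject to $\epsilon(\delta)$ being sufficiently small. The preparatory task is therefore to show that, with $h$ fixed, $\delta$ can be driven to zero by taking $N$ large. This follows from two observations: since $s>7/2>1/2$, the Sobolev embedding $H^s(\mathbb{R}) \hookrightarrow C_0(\mathbb{R})$ (obtained from $\widehat{u}\in L^1$ via Cauchy--Schwarz and Riemann--Lebesgue) combined with $u\in C([0,T],H^s(\mathbb{R}))$ implies that $\{u(\cdot,t):t\in[0,T]\}$ is a compact subset of $C_0(\mathbb{R})$, which in turn yields the uniform spatial decay $\sup_{t\in[0,T]}|u(x,t)|\to 0$ as $|x|\to\infty$; and the first step already gives $|(u_h)_i(t)| \leq |u(ih,t)| + Ch^2$, so that
\begin{displaymath}
\delta \leq \sup_{|x|>Nh,\ t\in[0,T]}|u(x,t)| + Ch^2,
\end{displaymath}
which can be made arbitrarily small by choosing $N$ sufficiently large. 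Since $f$ is continuous with $f(0)=0$, one then has $\epsilon(\delta)\to 0$ as $\delta\to 0$.

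Given $\epsilon>0$ and a suitably small $h$, the closing argument is: pick $\delta_0>0$ with $C\epsilon(\delta_0)\leq\epsilon$ and small enough for Theorem \ref{theo:theo5.1} to apply, then choose $N$ so that $\delta \leq \delta_0$. Theorem \ref{theo:theo5.1} then yields the second-term bound $\leq \epsilon$ on $[0,T]$, and substitution into the triangle inequality delivers (\ref{eq:fivethree}). Note that $N$ depends on both $h$ and $\epsilon$, consistent with the statement.

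The main obstacle is the uniform-in-$t$ decay of $u(x,t)$ at spatial infinity. Pointwise decay for each fixed $t$ is immediate from $H^s \hookrightarrow C_0$, but to make $\delta$ small uniformly on $[0,T]$ one needs full compactness of the trajectory in $H^s$, which is what continuity into $H^s$ supplies via the standard equicontinuity-at-infinity property of compact subsets of $C_0$. Everything else is routine bookkeeping, analogous in structure to the proof of Theorem \ref{theo:theo4.1} already sketched by the authors.
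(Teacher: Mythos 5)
Your proposal follows essentially the same route as the paper: Theorem \ref{theo:theo5.3} is obtained by combining the quadratic discretization estimate of Theorem \ref{theo:theo4.2} with the truncation estimate of Theorem \ref{theo:theo5.1} through the triangle inequality, which is exactly what the paper does (it states the result "by combining" these theorems and delegates the existence of a suitable $N$ to the decay discussion of Section \ref{sec:sec5} and Proposition 5.3 of \cite{Erbay2018}). Your self-contained justification of the uniform-in-$t$ spatial decay via compactness of the trajectory in $H^{s}\hookrightarrow C_{0}(\mathbb{R})$ is a valid replacement for that citation, and you are in fact more careful than the paper in defining $\delta$ through $\mathbf{u}_{h}$ and bridging to $u$ with the $\mathcal{O}(h^{2})$ bound. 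One imprecision should be repaired: from $\delta \le \sup_{|x|>Nh,\,t\in[0,T]}|u(x,t)| + Ch^{2}$ you cannot make $\delta$ arbitrarily small by increasing $N$ with $h$ fixed, so the closing step "pick $\delta_{0}$ with $C\epsilon(\delta_{0})\le \epsilon$, then choose $N$ so that $\delta\le\delta_{0}$" is not guaranteed when $\epsilon \ll h^{2}$. This does not threaten the conclusion, because the target in (\ref{eq:fivethree}) is $\mathcal{O}(h^{2}+\epsilon)$ rather than $\epsilon$ alone: since $f\in C^{1}$ with $f(0)=0$, one has $\epsilon(\delta)\le L\delta$ for $\delta$ bounded, so choosing $N$ with $\sup_{|x|>Nh,\,t}|u(x,t)|\le \epsilon$ gives $\epsilon(\delta)\le L(\epsilon+Ch^{2})$, which is small for small $h$ and $\epsilon$ (so Theorem \ref{theo:theo5.1} applies) and yields a truncation contribution $C\epsilon(\delta)=\mathcal{O}(h^{2}+\epsilon)$ that is absorbed into the stated bound. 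With that one-line adjustment your argument is complete and matches the paper's intended proof.
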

\begin{remark}\label{rem:rem5.4}
     Finally we want to consider the stability issue of our numerical scheme. Our approach does not involve discretizations of spatial derivatives of $u$, instead the spatial derivatives act upon the kernel $\beta$. This appears as the coefficient matrix $B^{N}$ in (\ref{eq:trunca}) and Lemma \ref{lem:lem3.3} shows that $B^{N}$ is uniformly bounded with respect to  the mesh size $h$. In other words, the factor $h^{-1}$ that appears in the standard discretization of the first derivatives is not effective in (\ref{eq:trunca}); it is already embedded in the uniform estimate of $B^{N}$. So if we further apply time discretization to (\ref{eq:trunca}) there will be no stability limitation regarding spatial mesh size. Roughly speaking, a fully discrete version of our numerical scheme will be straightforward. This is the main reason why we  prefer to use an ODE solver rather than a fully discrete scheme in the numerical experiments performed in the next section.
\end{remark}

\subsection{A Decay Estimate}
A central question about Theorems \ref{theo:theo5.2} and \ref{theo:theo5.3} is whether we can find a fixed value of the truncation parameter $N$ such that it provides to achieve a desired level of the truncation error $\epsilon$.
For a given level of the truncation error, $\epsilon>0$, the equation
\begin{displaymath}
        \epsilon=\max\big\{ |f(u(x,t))|: |x|\geq Nh, ~~t\in [0, T]\big\}.
    \end{displaymath}
provides  an implicit description of the solution set for $N$. We note that Proposition 5.3 of \cite{Erbay2018} guarantees the existence of such $N$ under the assumption that the solution decays to zero as $\vert x \vert \rightarrow \infty$.  Furthermore, decay estimates for the solution to the initial-value problem may play a central role in obtaining a more explicit relation between $N$ and $\epsilon$. The following lemma  establishes a general decay estimate for the solutions corresponding to certain kernel functions.
\begin{lemma}\label{lem:lem5.5}
    Let $\omega\left(x\right) $ be a positive function such that $\left(\left\vert \beta ^{\prime}\right\vert \ast \omega\right)(x) \leq C\omega(x) $ for all $x\in \mathbb{R}$. Suppose that $\varphi \omega^{-1}\in L^{\infty }\left(\mathbb{R}\right) $. The solution $u\in C^{1}\left([0,T], H^{s}(\mathbb{R})\right)$ of (\ref{eq:cont1})-(\ref{eq:initial}) then satisfies the estimate
    \begin{displaymath}
        \left\vert u(x,t)\right\vert \leq C\omega\left( x\right)
    \end{displaymath}
    for all  $x\in \mathbb{R}$,  $t\in \left[ 0,T\right] $.
\end{lemma}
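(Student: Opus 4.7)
The plan is to work in the weighted sup-norm $V(t) = \sup_{x \in \mathbb{R}} |u(x,t)|/\omega(x)$ and close a Gronwall-type inequality for $V$. The starting point is to integrate (\ref{eq:cont1}) in time, writing
\begin{displaymath}
u(x,t) = \varphi(x) - \int_{0}^{t} (\beta' \ast f(u))(x,\tau)\, d\tau,
\end{displaymath}
where $\beta' = \mu$ is interpreted as a finite Borel measure, so $(\beta' \ast f(u))(x,\tau) = \int f(u(x-y,\tau))\,d\mu(y)$. Since $u \in C^{1}([0,T],H^{s}(\mathbb{R}))$ with $s > 1/2$, the Sobolev embedding gives $M := \sup_{0 \le t \le T}\|u(t)\|_{L^{\infty}} < \infty$, and because $f$ is smooth with $f(0)=0$ it is Lipschitz on $[-M,M]$, so $|f(z)| \le L|z|$ for $|z|\le M$ with $L = L(M)$.

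Assuming for the moment that $V(\tau)$ is finite on $[0,T]$, the hypothesis $|u(y,\tau)| \le V(\tau)\omega(y)$ combined with the assumption $(|\beta'|\ast\omega)(x)\le C\omega(x)$ yields
\begin{displaymath}
|(\beta'\ast f(u))(x,\tau)| \le L\!\int \omega(x-y)\,d|\mu|(y)\cdot V(\tau) = L\, V(\tau)(|\beta'|\ast\omega)(x) \le LC\, V(\tau)\,\omega(x).
\end{displaymath}
Plugging this back into the integral formula, dividing by $\omega(x)>0$, and taking the supremum over $x$ gives $V(t) \le V(0) + LC\int_0^t V(\tau)\,d\tau$, where $V(0) = \|\varphi\omega^{-1}\|_{L^{\infty}} < \infty$ by hypothesis. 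Gronwall's inequality then produces $V(t)\le V(0)e^{LCt}$ on $[0,T]$, which is exactly the claimed pointwise bound $|u(x,t)|\le C\omega(x)$.

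The only delicate point, and where I expect the main technical obstacle to lie, is rigorously justifying that $V(t)$ is finite on $[0,T]$ so that the Gronwall step is legitimate. There are two natural routes. The first is a bootstrap/continuity argument: set $t^{\ast} = \sup\{t\in[0,T] : V(\tau)\le 2V(0)e^{LC\tau} \text{ for all }\tau\in[0,t]\}$, show $t^{\ast}>0$ using continuity of $u$ in $t$ with values in an appropriate space, and use the derived differential inequality to rule out $t^{\ast}<T$. The second, perhaps cleaner, route is to replace $V$ by the truncated version $V_{R}(t)=\sup_{|x|\le R}|u(x,t)|/\omega(x)$, which is automatically finite since $u$ is continuous and $\omega$ positive; the same computation delivers $V_{R}(t)\le V(0)+LC\int_{0}^{t}V_{R}(\tau)\,d\tau$ with constants independent of $R$, and one lets $R\to\infty$. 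Either way the core estimate is exactly the one above, and the argument is the direct nonlocal analogue of the decay bound used by Bona, Pritchard and Scott in \cite{Bona1981}.
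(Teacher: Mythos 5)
Your core computation is exactly the intended argument: the paper in fact \emph{omits} the proof of Lemma \ref{lem:lem5.5}, referring to the idea of \cite{Bona1981} and to Lemma B.1 of \cite{Erbay2018}, and that argument is precisely the weighted Gronwall estimate you set up — write $u(x,t)=\varphi(x)-\int_0^t(\beta'\ast f(u))(x,\tau)\,d\tau$, use $|f(z)|\le L|z|$ for $|z|\le M$ together with $(|\beta'|\ast\omega)(x)\le C\omega(x)$ to get $V(t)\le V(0)+LC\int_0^tV(\tau)\,d\tau$ for $V(t)=\sup_x|u(x,t)|/\omega(x)$, and conclude by Gronwall. So the main estimate is fine and matches the intended proof.

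The genuine gap is in the step you yourself flag: the a priori finiteness of $V(t)$, and neither of your proposed repairs works as stated. The truncation route fails because the problem is nonlocal: for $|x|\le R$ the convolution $(\beta'\ast f(u))(x,\tau)=\int f(u(x-y,\tau))\,d\mu(y)$ samples $u$ at points $x-y$ with $|x-y|>R$, where the only available bound is $\|u\|_{L^\infty}\le M$, not $V_R(\tau)\omega(x-y)$. The leftover term is of size $LM\,|\mu|(\{y:|x-y|>R\})$, and after dividing by $\omega(x)$ it is \emph{not} uniformly controlled: for $x=R$ the set $\{y:|x-y|>R\}$ contains $(-\infty,0)$, so with $\omega(x)=e^{-r|x|}$ (the case of interest) this contributes at least $LM\,|\mu|((-\infty,0))\,e^{rR}\to\infty$ as $R\to\infty$; hence the claimed $R$-independent inequality for $V_R$ is false. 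The bootstrap route presupposes that $V$ is finite and (semi)continuous near $t=0$, which is exactly what is in question, since $u(t)\in H^s$ gives no control of $|u(x,t)|/\omega(x)$ at large $|x|$. A clean fix keeps your computation verbatim but uses the modified weight $\omega_\epsilon=\omega+\epsilon$: then $V_\epsilon(t):=\sup_x|u(x,t)|/\omega_\epsilon(x)\le M/\epsilon<\infty$ automatically, $(|\mu|\ast\omega_\epsilon)(x)\le C\omega(x)+\epsilon\,|\mu|(\mathbb{R})\le C'\omega_\epsilon(x)$ with $C'=\max\{C,|\mu|(\mathbb{R})\}$, and $V_\epsilon(0)\le\|\varphi\omega^{-1}\|_{L^\infty}$; moreover $t\mapsto V_\epsilon(t)$ is measurable and bounded (a supremum of continuous functions, as $H^s\hookrightarrow C_b$), so Gronwall gives $V_\epsilon(t)\le V_\epsilon(0)e^{LC'T}$ uniformly in $\epsilon$, and letting $\epsilon\to0$ pointwise yields $|u(x,t)|\le C\omega(x)$. (Alternatively, run the same estimate along the Picard iterates of the integral equation, which remain in the weighted space by induction, and pass to the limit using uniqueness.)
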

The proof of this lemma  follows a similar idea used in \cite{Bona1981} and it is almost the same as the proof of Lemma B.1 in Appendix B of \cite{Erbay2018}, so we omit it here for brevity.

Next, as applications of the above lemma, we consider the two example kernel functions  that will be used in the next section.
\begin{example}[The generalized BBM equation]
        \label{ex:ex5.6}
      As it was pointed out in Section \ref{sec:sec1}, the generalized BBM equation (\ref{eq:bbm}) corresponds to the kernel $\beta (x) =\frac{1}{2}e^{-\left\vert x\right\vert }$. Note that $\beta^{\prime}(x)=-\mbox{sign}(x)\beta(x)$ and $\vert \beta^{\prime}(x)\vert=\beta(x)$ for $x\neq 0$. Take $\omega\left( x\right) =e^{-r\left\vert x\right\vert }$ with $0<r<1$. So we have
    \begin{align*}
        \left(\vert \beta^{\prime}\vert \ast \omega\right)(x)
        &=\frac{1}{2}\int_{\mathbb{R}} e^{-\left\vert x-y\right\vert }e^{-r\left\vert y\right\vert }dy
        =\frac{1}{2}e^{-r \vert x \vert}\int_{\mathbb{R}}  e^{-\left\vert x-y\right\vert }e^{r\left( \vert x\vert-\vert y\vert \right) }dy \\
        &\leq \frac{1}{2}e^{-r\left\vert x\right\vert }\int_{\mathbb{R}} e^{-(1-r)\left\vert x-y\right\vert }dy
        =\frac{1}{1-r}e^{-r\left\vert x\right\vert }\leq C\omega(x).
    \end{align*}%
    By Lemma \ref{lem:lem5.5}, for initial data with $\varphi={\cal O}( e^{-r\left\vert x\right\vert })$,  solutions of the BBM equation satisfy the decay estimate
    \begin{displaymath}
        \left\vert u(x,t)\right\vert \leq Ce^{-r\left\vert x\right\vert }, ~~~~~0<r<1
    \end{displaymath}
     for all $t\in \left[ 0,T\right]$.
\end{example}
\begin{example}[The generalized Rosenau equation]
    \label{ex:ex5.7}
     Another member of the class (\ref{eq:cont}) is the generalized Rosenau equation (\ref{eq:rosenau}) corresponding to the kernel (\ref{eq:ros-ker}). Note that, for $x \neq 0$,
     \begin{displaymath}
        \beta^{\prime}(x)=-\frac{1}{2}\mbox{sign} (x)e^{-\frac{\vert x \vert}{\sqrt{2}}}\sin\big(\frac{\vert x \vert}{\sqrt{2}}\big).
     \end{displaymath}
     Again if we take $\omega\left( x\right) =e^{-r\frac{ \vert x\vert}{\sqrt{2}} }$ with any $0<r<1$, we get
    \begin{align*}
        \left(\left\vert \beta^{\prime}\right\vert \ast \omega\right)(x)
        &=\frac{1}{2}\int_{\mathbb{R}} e^{-r\frac{ \vert x-y\vert}{\sqrt{2}} }\vert \sin\big(\frac{ \vert x-y\vert}{\sqrt{2}}\big)\vert
        e^{-r\frac{ \vert y\vert}{\sqrt{2}} } dy \\
        &\leq \frac{1}{2}e^{-r\frac{ \vert x\vert}{\sqrt{2}} }\int_{\mathbb{R}} e^{-(1-r)\frac{ \vert x-y\vert}{\sqrt{2}} }dy         = \frac{\sqrt{2}}{1-r}e^{-r\frac{ \vert x\vert}{\sqrt{2}} } .
    \end{align*}%
     Thus, by   Lemma \ref{lem:lem5.5} we deduce that, for initial data satisfying $\varphi \left( x\right) e^{r\frac{\vert x \vert}{\sqrt{2}} }\in
    L^{\infty }\left( \mathbb{R}\right) $, solutions of the Rosenau equation will satisfy
    \begin{displaymath}
       \left \vert u(x,t)\right \vert \leq Ce^{-r\frac{\left\vert x\right\vert}{\sqrt{2} }}, ~~~~~0<r<1
    \end{displaymath}
     for all $t\in \left[ 0,T\right]$.
\end{example}
\begin{figure}[h!]
    \centering
    \includegraphics[width=0.70\linewidth,scale=1.50,keepaspectratio]{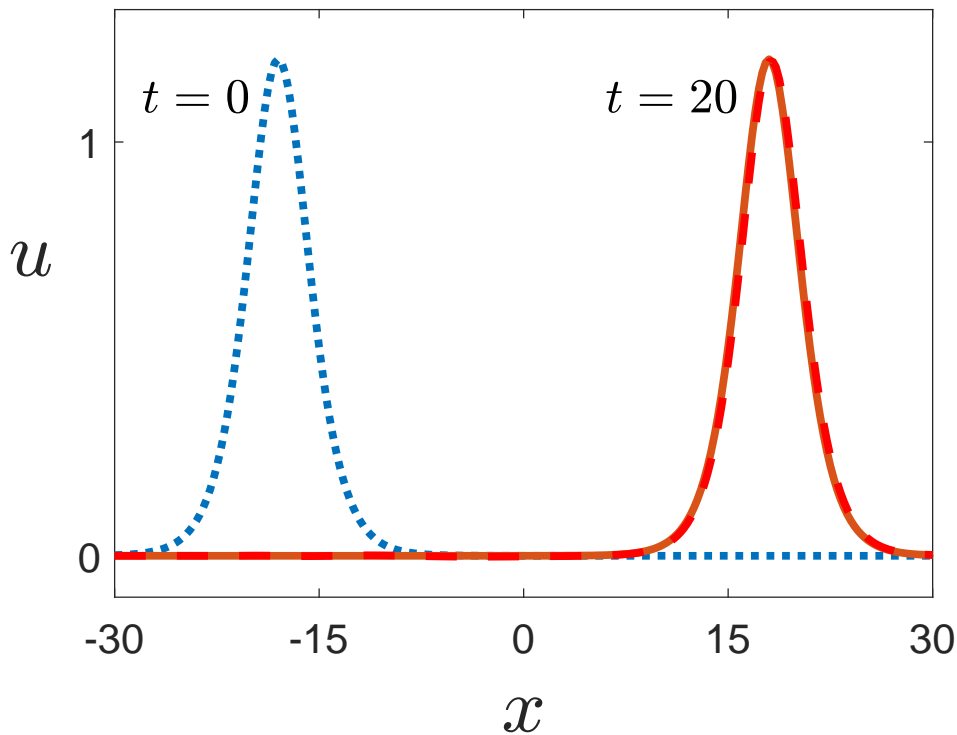}
    \caption{Propagation of a right-moving solitary wave of speed $c=1.8$ for the nonlocal nonlinear wave equation (\ref{eq:cont}) with $\beta(x)={\frac{1}{2}}e^{-|x|}$ and $f(u)=u+u^{2}$. The initial profile, the exact and the numerical solutions at   $t=20$   are shown with the dotted line, the solid line and the dashed line, respectively. The numerical solution is almost indistinguishable from the exact solution. For the numerical simulation the computational domain $[-30, \, 30]$ and the mesh size   $h=0.25$ are used.}
    \label{fig:Fig1}
\end{figure}

\setcounter{equation}{0}
\section{Numerical Experiments}\label{sec:sec6}

In this section we perform  some numerical experiments to compare the numerical solutions of the semi-discrete scheme  developed here with the exact solutions.   We apply the scheme to  two simplest members of the class:  the BBM equation (\ref{eq:bbm}) and   the Rosenau equation (\ref{eq:rosenau}).   For both of the equations, the corresponding kernels satisfy the smoothness condition of  Theorem \ref{theo:theo5.3} and hence we have  the second-order accuracy with respect to the mesh size. Moreover,  Examples  \ref{ex:ex5.6} and   \ref{ex:ex5.7} provide  decay estimates for solutions of these equations which in turn gives us the dependency of $\epsilon$ on $N$ in the estimate of Theorem \ref{theo:theo5.3}.

Except the errors originating from the  temporal  integration of (\ref{eq:trunca}),  there are two sources of  error in the semi-discrete scheme and consequently in our computations:  the discretization of the nonlocal equation in space and the consideration of a finite number of grid points.  This expectation for the semi-discrete scheme is  also confirmed by the numerical experiments. Furthermore,  the numerical experiments in this section  show  that the  spatial discretization error  is of second-order accurate in $h$  and that the  cut-off error resulting from the restriction of the infinite number of the grid points  to a finite number can be made smaller than  the discretization error for all $N$ sufficiently large.
\begin{figure}[h!]
    \centering
    \includegraphics[width=0.70\linewidth,scale=1.50,keepaspectratio]{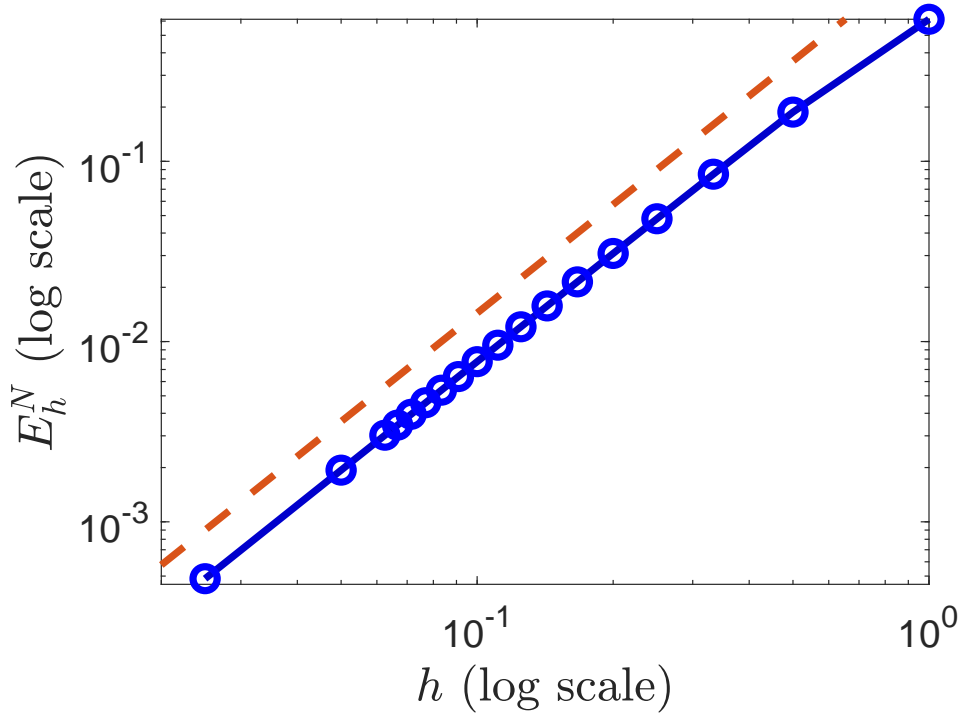}
    \caption{Variation of the error at   $t=20$ with the mesh size $h$. The circle markers indicate the data points of the numerical experiments.  The theoretical quadratic convergence is plotted as a dashed line with slope 2:1 for reference. Propagation of a right-moving solitary wave of speed $c=1.8$ for the nonlocal nonlinear wave equation (\ref{eq:cont}) with $\beta(x)={\frac{1}{2}}e^{-|x|}$ and $f(u)=u+u^{2}$.  The computational domain $[-30, \, 30]$  is used.}
    \label{fig:Fig2}
\end{figure}

In all the experiments, we employ the  Matlab ODE solver \verb"ode45" based on the fourth-order Runge-Kutta method to integrate  (\ref{eq:trunca}) in time.  To keep the temporal errors below the desired tolerance,
we set the relative and absolute tolerances for the solver \verb"ode45" to be $RelTol=10^{-10}$ and $AbsTol=10^{-10}$, respectively. The  $l^{\infty}$-error $E_{h}^{N}$ at time $t$ is calculated as
\begin{equation}
    E_{h}^{N}(t)=\left\Vert \mathbf{u}(t) - \mathbf{u}_{h}^{N}(t) \right\Vert_{l^{\infty}}
                =\max_{-N\leq i \leq N} \left \vert u(x_{i},t)- (\mathbf{u}_{h}^{N})_{i}\right \vert . \label{eq:linferror}
 \end{equation}
By postulating that the relationship between the error and the mesh size is in the form $E_{h}=Ch^{\rho}$, we calculate the  convergence rate $\rho$,
 \begin{equation}
    \rho={{\log\left({{E_{h_{1}}^{N}(t)}/ {E_{h_{2}}^{N}(t)}}\right)}\over {\log\left({h_{1}}/ {h_{2}}\right)}},
 \end{equation}
from the errors at two successive values $h_{1}$ and $h_{2}$ of the mesh size.

\subsection{The  BBM Equation}

We first  illustrate the accuracy of our semi-discrete scheme by considering solitary wave solutions of the generalized BBM equation  which is a member of the class (\ref{eq:cont}) with the exponential kernel.
\begin{figure}[h!]
    \centering
    \includegraphics[width=0.70\linewidth,scale=1.50,keepaspectratio]{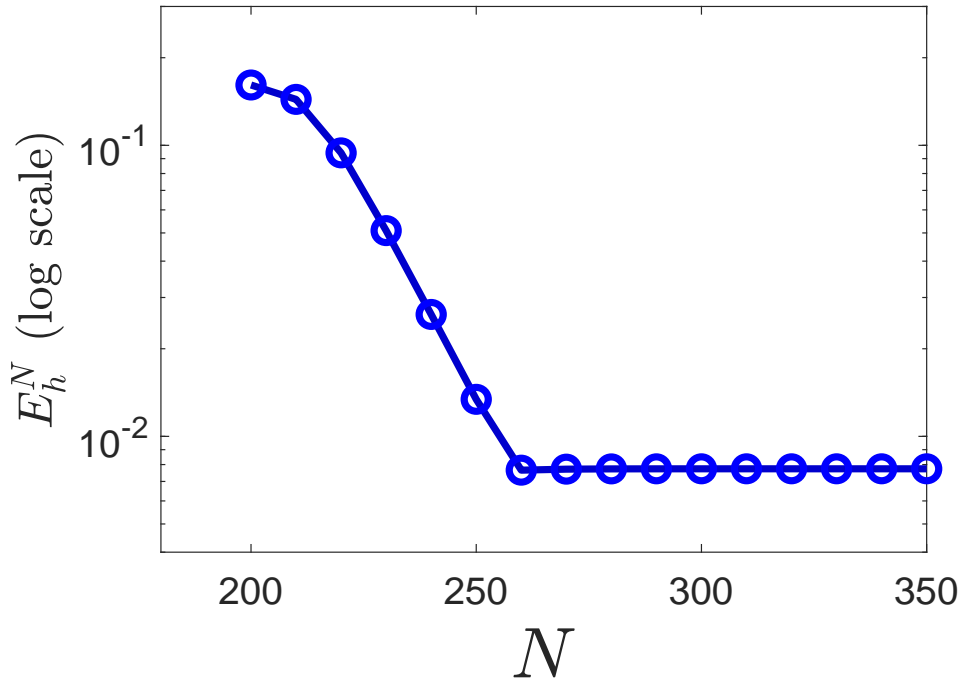}
    \caption{Variation of  the $l^{\infty}$-error ($E_{h}^{N}$) with $N$ for the  solitary wave problem of the nonlocal nonlinear wave equation (\ref{eq:cont}) with the wave speed  $c=1.8$, the kernel $\beta(x)={\frac{1}{2}}e^{-|x|}$  and the quadratic nonlinearity  $f(u)=u+u^{2}$. The  mesh size  is fixed at $h=0.1$. The computational domain is $[-Nh,  Nh]$. The circle markers indicate the data points of the numerical experiments.}
    \label{fig:Fig3}
\end{figure}

The BBM equation (\ref{eq:bbm}) with general power nonlinearity admits  a family of solitary wave solutions given by
\begin{equation}
    u(x,t)=A~\Big(\text{sech}^{2}\big(B(x-ct-x_{0}) \big)\Big)^{1/p}, \label{eq:solitary}
\end{equation}
with $A=\big((p+2)(c-1)/2\big)^{1/p}$, $B=(p/2)(1-1/c)^{1/2}$ and $c >1$ \cite{Bona2000}.  The solitary wave  (\ref{eq:solitary}) is initially located at $x_{0}$ and propagates to the right  with the constant wave speed $c$. To compare the numerical and exact solutions,  we set $p=1$, $c=1.8$ and  solve (\ref{eq:trunca}) with the initial data
\begin{equation}
    u(x,0)=A\, \text{sech}^{2}\big(B (x+18) \big)
\end{equation}
using the Matlab ODE solver \verb"ode45". Here the computational domain is taken as $[-30, 30]$ while  the grid spacing is chosen as $h=0.25$ for which $N=120$. In Figure \ref{fig:Fig1} we plot the exact and numerical solutions at  $ t= 20$  together with the initial state.  The numerical solution has the same amplitude and waveform as that of the exact solution and overall there is a very good agreement between the two curves.  This numerical experiment further verifies that the proposed semi-discrete  scheme  is capable of solving (\ref{eq:cont}) to high accuracy.
\begin{figure}[h!]
    \centering
    \includegraphics[width=0.70\linewidth,scale=1.50,keepaspectratio]{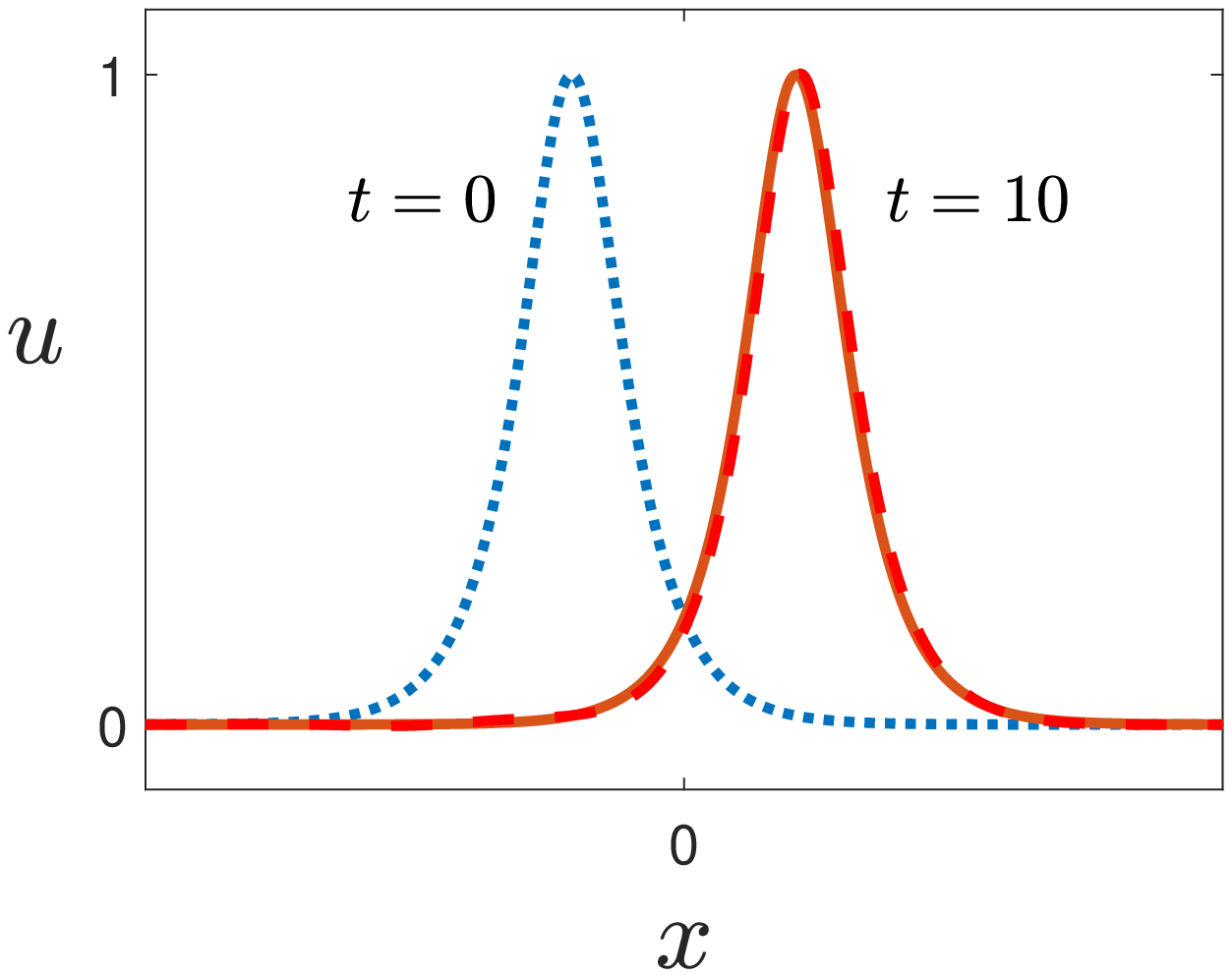}
    \caption{Propagation of a right-moving solitary wave of speed $c=0.5$ for the nonlocal nonlinear wave equation (\ref{eq:cont}) with $\beta(x)= {1\over {2\sqrt 2}}e^{-{\vert x\vert\over \sqrt 2}}\Big( \cos\big({{\vert x\vert}\over {\sqrt 2}}\big) + \sin \big({{\vert x\vert}\over {\sqrt 2}}\big) \Big )$ and $f(u)=u-10u^{3}+12 u^{5}$. The initial profile, the exact and the numerical solutions at   $t=10$   are shown with the dotted line, the solid line and the dashed line, respectively. The numerical solution is almost indistinguishable from the exact solution.
    The computational domain $[-12, \, 12]$ and the mesh size   $h=0.05$ are used.}
    \label{fig:Fig4}
\end{figure}

In order to verify the convergence rate estimate derived in Theorem \ref{theo:theo4.2} for the discretization error we now conduct a sequence of numerical experiments with different mesh sizes.  Figure \ref{fig:Fig2} shows the variation of  the error  measured using (\ref{eq:linferror})  with the mesh size.  The computational domain  is chosen large enough such that the discretization  error (${\cal O}\left(h^{2}\right)$) dominates the truncation error (${\cal O}\left(\epsilon\right)$). The figure has logarithmic scales on both axes and the dashed line corresponding to the theoretical quadratic convergence in space is also displayed  for comparison. From the figure, one can observe that the numerical experiments provide a confirmation of the quadratic convergence rate predicted  by  Theorem \ref{theo:theo4.2}.

In another set of the numerical experiments we aim to show that, for sufficiently large values of $N$, the truncation error  due to the use of a finite number of grid points  has no significant effect on the above numerical results.  First, it is worth emphasizing that the solitary wave  solution in (\ref{eq:solitary}) decays exponentially to zero for $|x| \rightarrow \infty$. Second, Example \ref{ex:ex5.6} shows that $\epsilon={\cal O}\left(e^{-CNh}\right)$ (recall that $E_{h}^{N}={\cal O}\left(h^{2}+\epsilon\right)$). In the present set of the numerical experiments we fix the mesh size and vary the number of grid points, so the size of the computational domain $[-Nh,  Nh]$ is not the same for each experiment. However, by taking  the initial condition and the time interval of the previous experiments and by taking sufficiently large $N$, we guarantee that the wave dynamics occurs over a symmetric interval about the origin (with equidistant from both the left and right endpoints). The variation of the error at time $t=20$ with $N$ is shown in Figure \ref{fig:Fig3} using a semi-logarithmic scale. We see that, up to a certain value of $N$ ($ \approx 260$), the truncation error dominates and it decreases exponentially as $N$ increases as indicated by $\epsilon={\cal O}\left(e^{-CNh}\right)$. However, for larger values of $N$, this phenomenon disappears and the discretization error dominates.

\subsection{The Generalized Rosenau Equation}

We  continue our dicussion  by considering the generalized Rosenau equation (\ref{eq:rosenau}) which is a member of the class (\ref{eq:cont}) with the kernel (\ref{eq:ros-ker}).   In \cite{Park1992} it was pointed out that a particular solitary wave solution
\begin{equation}
    u(x,t)=\text{sech}\big(x-\frac{1}{2}t-x_{0}\big) \label{eq:solitaryR}
\end{equation}
with the constant wave speed $1/2$, the width $1$, the  amplitude $1$ and the initial position $x_{0}$ satisfies the Rosenau equation (\ref{eq:rosenau}) if the nonlinear term is in the form $g(u)=-10u^{3}+12 u^{5}$.  It is worth mentioning that, contrary to the quadratic nonlinearity of the BBM equation, the Rosenau equation considered here has both cubic and quintic nonlinear terms. Consequently, a high resolution enhanced by a smaller mesh size is needed to achieve  the same order of accuracy.

Generally, we follow closely what was done in the previous subsection for the BBM equation. Now, the initial condition is set to $u(x,0)=\text{sech}(x+5/2)$ and  (\ref{eq:trunca}) is solved  up to time $ t= 10$ over the computational domain $[-12, 12]$ corresponding to $h=0.05$ ($N=240$). In Figure \ref{fig:Fig4}   the numerical and exact solutions at $t=10$ are demonstrated and  we  see they agree very well.

To verify the quadratic convergence rate in space we now repeat the same numerical experiment with  different values of $h$ (or equivalently with different values of $N$). On a logarithmic scale, Figure \ref{fig:Fig5} displays the variation of the $l^{\infty}$-error $E_{h}^{N}$ at time $t=10$ with the mesh size $h$  (again together with the dashed line of the theoretical quadratic convergence rate for comparison purposes). The computational results in Figure \ref{fig:Fig5} are very similar to those shown in Figure \ref{fig:Fig2} and they again verify the quadratic  rate of convergence claimed in Theorem \ref{theo:theo4.2} for the discretization error.
\begin{figure}[h!]
    \centering
    \includegraphics[width=0.70\linewidth,scale=1.50,keepaspectratio]{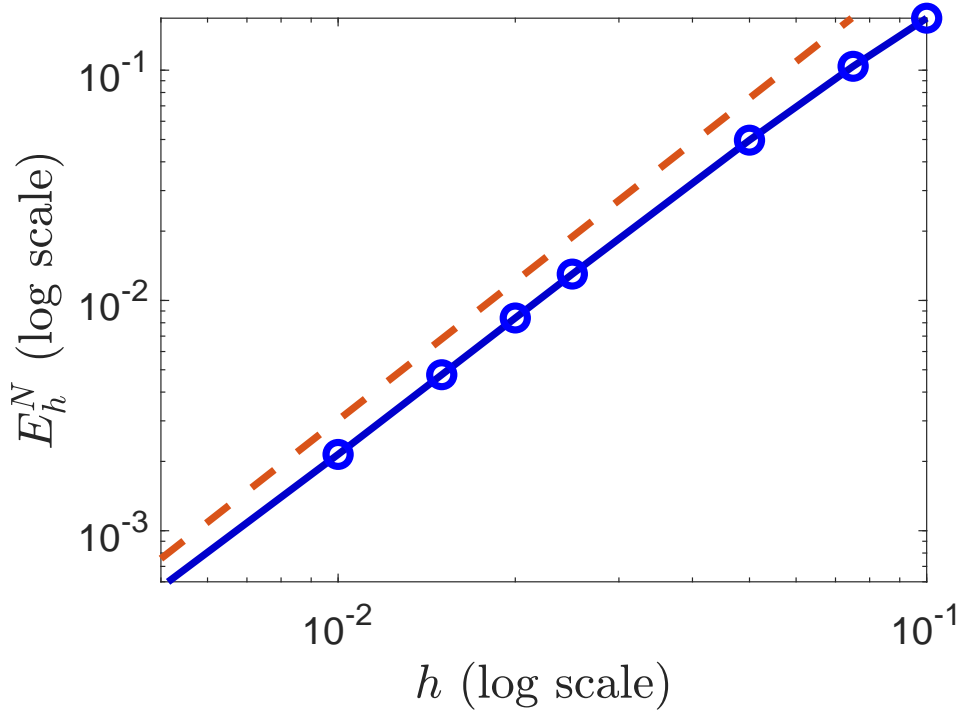}
    \caption{Variation of the error at   $t=10$ with the mesh size $h$. The circle markers indicate the data points of the numerical experiments. The theoretical quadratic convergence is plotted as a dashed line with slope 2:1 for reference. Propagation of a right-moving solitary wave of speed $c=0.5$ for the nonlocal nonlinear wave equation (\ref{eq:cont}) with $\beta(x)= {1\over {2\sqrt 2}}e^{-{\vert x\vert\over \sqrt 2}}\Big( \cos\big({{\vert x\vert}\over {\sqrt 2}}\big) + \sin \big({{\vert x\vert}\over {\sqrt 2}}\big) \Big )$ and $f(u)=u-10u^{3}+12 u^{5}$.  The computational domain $[-12, \, 12]$  is used.}
    \label{fig:Fig5}
\end{figure}

In another set of numerical experiments for the Rosenau equation we investigate how the finite number of grid points affects the  error. As for the BBM equation, this is done by fixing the mesh size, $h=0.05$, and increasing $N$ until the error does not decrease anymore, i.e., until the error is nearly identical to the discretization error.  In Figure \ref{fig:Fig6}, we plot, on a semi-logarithmic scale, the numerical results for the $l^{\infty}$-error $E_{h}^{N}$ at time $t=10$  with $N$. The figure shows that the error decreases exponentially for  the relatively small values of $N$ while the relatively large values of $N$  have no  major influence on the error. Recalling that the solitary wave solution in (\ref{eq:solitaryR}) decays exponentially to zero for $|x| \rightarrow \infty$ and  the expectation  $\epsilon={\cal O}\left(e^{-CNh}\right)$ obtained from Example \ref{ex:ex5.7}, one can conclude that these numerical experiments validate the theoretical results. Also, it is worth mentioning that   the above results obtained for the Rosenau equation are qualitatively similar to those obtained for the BBM equation.
\begin{figure}[h!]
    \centering
    \includegraphics[width=0.70\linewidth,scale=1.50,keepaspectratio]{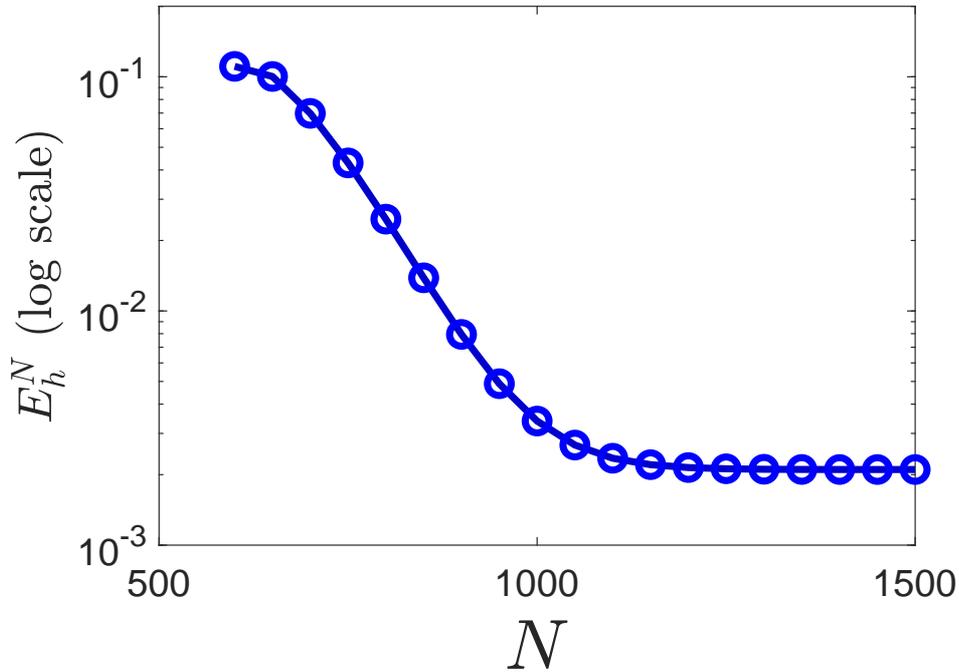}
    \caption{Variation of  the $l^{\infty}$-error ($E_{h}^{N}$) at   $t=10$ with $N$ for the  solitary wave problem of the nonlocal nonlinear wave equation (\ref{eq:cont}) with the wave speed  $c=0.5$, the kernel $\beta(x)= {1\over {2\sqrt 2}}e^{-{\vert x\vert\over \sqrt 2}}\Big( \cos\big({{\vert x\vert}\over {\sqrt 2}}\big) + \sin \big({{\vert x\vert}\over {\sqrt 2}}\big) \Big )$  and the  nonlinear term  $f(u)=u-10u^{3}+12 u^{5}$. The  mesh size  is fixed at $h=0.05$. The computational domain is $[-Nh,  Nh]$. The circle markers indicate the data points of the numerical experiments. }
    \label{fig:Fig6}
\end{figure}

\bibliographystyle{unsrt}
\bibliography{uniref}

\end{document}